 \DeclareMathOperator{\TrueIm}{Im}
 \DeclareMathOperator{\idd}{id}
\DeclareMathOperator{\repq}{Rep} \DeclareMathOperator{\HOMM}{Hom}
\DeclareMathOperator{\lspan}{span} \DeclareMathOperator{\Grass}{Gr}
\DeclareMathOperator{\IHom}{IHom} 
 \DeclareMathOperator{\GL}{GL}
\DeclareMathOperator{\Ende}{End} 
 \DeclareMathOperator{\Mat}{Mat}
\newtheorem{theorem}{Theorem}[section]
\newtheorem{lemma}[theorem]{Lemma}
\newtheorem{proposition}[theorem]{Proposition}
\newtheorem{corollary}[theorem]{Corollary}
\theoremstyle{definition}
\newtheorem{example}[theorem]{Example}
\theoremstyle{remark}
\newtheorem{remark}[theorem]{Remark}
\numberwithin{equation}{section}
\begin{document}
\fontfamily{ptm} \fontsize{11pt}{14pt} \selectfont
\author[S. Fedotov]{Stanislav Fedotov}
\title[Framed moduli spaces and tuples of operators]{Framed moduli spaces and tuples of operators}
\subjclass[2010]{Primary 14D22; Secondary 16G10, 16G20.}
\address{Moscow State University, department of Higher Algebra, Russia, 119991, Moscow, GSP-1, 1 Leninskiye Gory, Main Building
\endgraf Email: glwrath@yandex.ru}
\sloppy

\maketitle

\begin{abstract} In this work we address the classical problem of classifying tuples of linear operators and linear functions on a finite dimensional vector space up to base change. Having adopted for the situation considered a construction of framed moduli spaces of quivers, we develop an explicit classification of tuples belonging to a Zariski open subset. For such tuples we provide a finite family of normal forms and a procedure allowing to determine whether two tuples are equivalent.
\end{abstract}

\section*{Introduction}

A quiver $Q$ is a diagram of arrows, determined by two finite sets $Q_0$ (the
set of ``vertices'') and $Q_1$ (the set of ``arrows'') with two maps
$h,t: Q_1\rightarrow Q_0$ which indicate the vertices at the head
and tail of each arrow. A representation $W$ of $Q$
is a collection of (probably infinite dimensional) $\Bbbk$-vector spaces
$W_i$, for each $i\in Q_0$, together with linear maps $W_a:
W_{ta}\rightarrow W_{ha}$, for each $a\in Q_1$. The dimension vector
$\alpha\in\mathbb{Z}^{Q_0}$ of such a representation is given by
$\alpha_i = \dim_{\Bbbk}{W_i}$. A morphism $\psi:W\rightarrow U$ of representations consists of
linear maps $\psi_i: W_i\rightarrow U_i$, for each $i\in Q_0$, such
that $\psi_{ha}W_a = U_a\psi_{ta}$, for each $a\in Q_1$.
It is an isomorphism if and only if each $\psi_i$ is.


Having chosen vector spaces $W_i$ of dimension $\alpha_i$, the
isomorphism classes of representations of $Q$ with dimension vector
$\alpha$ are in natural one-to-one correspondence with the orbits of
the group
$$GL(\alpha) := \prod_{i\in Q_0}GL(W_i)$$in
the representation space
$$\repq(Q,\alpha) := \bigoplus_{a\in Q_1}\HOMM(W_{ta},W_{ha}).$$

The action is given by
$(g\cdot W)_a~=~g_{ha}W_ag^{-1}_{ta}$, where $g =
(g_i)_{i\in Q_0}\in GL(\alpha)$. Note that the one-parameter
subgroup $\Delta = \{(tE,\ldots,tE)\}$ acts trivially.

Quivers provide a convenient interpretation of many classical problems of linear algebra. The one we are particularly interested in is classification of tuples of $q$ linear operators and $k$ linear functions on an $m$-dimensional vector space. In the language of quivers this is equivalent to classification of $(m,1)$-dimensional representations of $L_{q,k}$, where by $L_{q,k}$ we denote the quiver with two vertices, $q$ loops in the first vertex and $k$ more arrows going from the first vertex to the second one. This problem is known to be wild even for $q = 2$ and $k = 0$, that is no hope remains to write down a complete list of isomorphism classes of representations or even to obtain an algorithm determining whether two given representations are isomorphic. In fact, representation theory of the quiver $L_2 := L_{2,0}$ is proved to be undecidable, see~\cite{WB} and~\cite{KM} for a rigorous formulation and a proof of this result.

However, for $\alpha = (m,1)$ it is possible to explicitly classify representations belonging to a Zariski open subset of $\repq(L_{q,k},\alpha)$. The idea comes from the study of stable framed representations.

Let $Q$ be a quiver and $\alpha$ be a dimension vector. Fix an
additional dimension vector $\zeta$ and consider the space $\repq(Q,
\alpha, \zeta) := \repq(Q, \alpha)\oplus\bigoplus_{i\in
Q_0}\HOMM_{\Bbbk}(\Bbbk^{\alpha_i},\Bbbk^{\zeta_i})$. Its elements
are said to be {\it framed representations} of $Q$. Define a
$GL(\alpha)$-action on $\repq(Q, \alpha, \zeta)$ by $g\cdot(M,
(f_i)_{i\in Q_0}) = (g\cdot M, (f_ig_i^{-1})_{i\in Q_0})$. A framed representation $(M, (f_i)_{i\in Q_0})$ is called {\it stable} if there is no
nonzero subrepresentation $N$ of $M$ with 
$M_i\subseteq\ker{f_i}$, for all $i\in Q_0$. Denote by $\repq^s(Q, \alpha, \zeta)$ the set of stable
framed representations. It is known (see, for example,~\cite[Theorem 2.3]{Reineke}), that the subset $\repq^s(Q,\alpha,\zeta)$ admits a geometric quotient, i.e., a morphism to an algebraic variety $\mathcal{M}^s(Q,\alpha,\zeta):=\repq^s(Q,\alpha,\zeta)/\!\!/GL(\alpha)$ whose fibers coincide with $GL(\alpha)$-orbits. Moreover, for quivers without oriented cycles M. Reineke proved~\cite[Proposition~3.9]{Reineke} that the quotient space $\mathcal{M}^s(Q,\alpha,\zeta)$ is isomorphic to a Grassmannian of subrepresentations of a certain injective representation of $Q$. In the general case the quotient is not projective and may not be realized as a Grassmannian of subrepresentations. However, some geometric structure may be revealed by projecting $\mathcal{M}^s(Q,\alpha,\zeta)$ to the categorical quotient $\repq(Q,\alpha,\zeta)/\!\!/\GL(\alpha)$ and studying fibers of this projection, see~\cite{smooth} and~\cite{SN} for details.

From~\cite[Proposition 0.9]{GIT} it follows that the quotient morphism $\repq^s(Q,\alpha,\zeta)\rightarrow\mathcal{M}^s(Q,\alpha,\zeta)$ is a principal fiber bundle. It remains a problem, however, to explicitly describe a finite (and possibly minimal) trivializing covering of the quotient. We construct such a covering using $J$-skeleta of framed representations, a concept that is a version of the one introduced by K. Bongartz and B.~Huisgen-Zimmermann for representations of finite dimensional algebras (see, for example,~\cite{BH-cl}) adopted and partially simplified to fit our setup. Namely, we show (Theorem 3.3) that $\repq^s(Q,\alpha,\zeta) = \bigcup_{\mathfrak{S}}X(\mathfrak{S})$, where $X(\mathfrak{S})$ are open subsets parameterized by $J$-skeleta $\mathfrak{S}$ such that $X(\mathfrak{S})\cong\GL(\alpha)\times\mathbb{A}^N$, for some positive integer $N$, and the restriction of the quotient map to $X(\mathfrak{S})$ is the projection onto the second factor.

Framed representations admit another useful interpretation. 
Consider a new quiver $Q^{\zeta}$ with $Q^{\zeta}_0 =
Q_0\cup\{\infty\}$, the arrow of $Q^{\zeta}$ being those of $Q$
together with $\zeta_i$ arrows from $i$ ($i\in Q_0$) to $\infty$. We
also extend the dimension vector $\alpha$ to $\alpha^{\zeta}$,
setting $\alpha^{\zeta}_i = \alpha_i$ for $i = 1,\ldots,n$ and
$\alpha^{\zeta}_{\infty} = 1$. It is easy to show that $\repq(Q,\alpha,\zeta)$ may be identified with $\repq(Q^{\zeta},\alpha^{\zeta})$.

Clearly $\repq(L_{q,k},(m,1))$ is the same as $\repq(L_q^{(k)},m^{(k)})$, i.e., as $\repq(L_q,m,k)$. So, there is a Zariski open subset $\repq^s(L_{q,k},(m,1))$ of $\repq(L_{q,k},(m,1))$ where a complete classification of representations is possible. Translating our definition of stable pairs into the language of linear algebra, we may say that $\repq^s(L_{q,k},(m,1))$ consists of such tuples $(\varphi_1,\ldots,\varphi_q,f_1,\ldots,f_k)\in\left(\Ende_{\Bbbk}(\Bbbk^m)\right)^q\oplus\left((\Bbbk^m)^{\ast}\right)^k$ that no common proper nonzero invariant subspace of $\varphi_i$ lies in the common kernel of all~$f_j$. In this paper we show how an explicit classification may be obtained in this setup over an arbitrary field~$\Bbbk$.

Section 1 is devoted to exploring a generalized version of the construction introduced in~\cite{Reineke}. In Section 2 we define $J$-skeleta of framed representations and show their existence. In Section 3 we prove that the quotient may be embedded as a locally closed subset in a product of ordinary Grassmannians and construct the above mentioned trivializing covering of $\mathcal{M}^s(Q,\alpha,\zeta)$. Furthermore, we provide a finite family of normal forms for each stable pair and an algorithm allowing to determine whether two stable framed representations are isomorphic. In Section 4 we give a series of examples illustrating how this technique works.

The author thanks his supervisor I. Arzhantsev for useful discussions.

\section{Stable framed representations}

Let $Q$ be a quiver with $n$ vertices, $\alpha$ and $\zeta$ be two dimension vectors. Choose a vector space $V = \bigoplus_{i\in Q_0}V_i$ with $\dim{V_i} = \zeta_i$, for $i\in Q_0$. 
Elements of $\repq(Q,\alpha,\zeta)$ may be viewed as pairs $(M,f)$, where $M$ is a representation of $Q$ and $f = \left(f_i: M_i\rightarrow V_i\right)_{i\in Q_0}$ is a map of graded vector spaces.

Recall that a path in $Q$ is a formal product of arrows $a_1\cdot\ldots\cdot a_k$ such that $t(a_i) = h(a_{i+1})$, for all $i = 1,\ldots,k-1$, or a symbol $e_i$ with $i\in Q_0$. For a path $\tau = a_1\cdot\ldots\cdot a_k$ we set $t(\tau) = t(a_k)$ and $h(\tau) = h(a_1)$. We also put $h(e_i) = t(e_i) = i$. There is an obvious way to multiply successive paths: if $h(\tau) = t(\sigma)$, the product $\sigma\cdot\tau$ is defined as the concatenation of these paths. All $e_i$ are treated as paths of zero length, that is $e_i^2 = e_i$, for all $i\in Q_0$, and $\tau e_{t(\tau)} = e_{h(\tau)}\tau$, for every path $\tau$.

For each $i\in Q_0$ denote by $I_i$
the following representation of $Q$. Set
$$(I_i)_j =
\left(\lspan\left\{\tau\mid\mbox{$\tau: j\rightsquigarrow i$ is a path in $Q$}\right\}\right)^*,$$ where ``$\tau: j\rightsquigarrow i$'' means that $\tau$ starts
in the $j$-th vertex and ends in the $i$-th one and ${(\cdot)}^*$ stands for the dual vector space; in this case
$((I_i)_{a: k\rightarrow l}f)(\tau) = f(\tau a)$, where
$\tau: l\rightsquigarrow i$. This may be rewritten in a more convenient way using the elements in $(I_i)_j$ dual to paths. Namely, to each path $\tau: j\rightsquigarrow i$ in $Q$ we associate an element $\tau^*$ in $(I_i)_j$ such that for every $\sigma: j\rightsquigarrow i$ we have
$$\tau^*(\sigma) = \begin{cases}
1,\mbox{ if $\tau = \sigma$},\\
0,\mbox{ otherwise}.
\end{cases}$$
Observe that elements of $(I_i)_j$ may be written as (probably infinite) formal series in $\tau^*$, for $\tau: i\rightsquigarrow j$. In this notation the maps $(I_i)_a$, $a\in Q_1$, are as follows:
$$(I_i)_a(\tau^*) =
\begin{cases}
\lambda^*,\mbox{ if $\tau = \lambda a$},\\
0,\mbox{ otherwise}.
\end{cases}$$

Consider the representation $J :=
\bigoplus_{i\in Q_0}I_i\otimes_{\Bbbk} V_i$. Notice that as a
$\Bbbk$-linear space
\begin{equation*}J_i = e_iJ\cong\prod_{j\in
Q_0}(I_j)_i\otimes_{\Bbbk}V_j\cong \prod_{j\in
Q_0}\prod_{\tau:i\rightsquigarrow
j}V_j\cong\prod_{\tau:i\rightsquigarrow j}V_j.\end{equation*}
Sometimes it is convenient to label each component $V_j$ by the corresponding path $\tau$ writing $J_i\cong\prod_{\tau:i\rightsquigarrow j}V_j^{(\tau)}$.

%
%

Given a point $(M, f)\in\repq(Q,\alpha,\zeta)$ we define a map
$\Phi_{(M,f)} = (\varphi_i)_{i\in Q_0}: M\rightarrow J$ by the
following rule: \begin{equation}\varphi_i = \prod_{\tau:
i\rightsquigarrow j} f_j\tau:
M_i\rightarrow\prod_{\tau:i\rightsquigarrow j}V_j,\end{equation}
where $\tau(x) :=
M_{a_1}\ldots M_{a_k}(x)$ for $x\in M_i$ and $\tau = a_1\ldots a_k$.

\smallskip

The following lemma is straightforward.

\begin{lemma} The map $\Phi_{(M,f)}$ is a morphism of representations of $Q$.
\end{lemma}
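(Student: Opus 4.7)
The plan is to verify directly the defining commutation relation of a morphism of representations, namely $\varphi_{ha}\circ M_a = J_a\circ\varphi_{ta}$ for every arrow $a\in Q_1$. Fix $a\colon i\to l$, so $ta=i$ and $ha=l$. First I would translate the action of $J_a\colon J_i\to J_l$ into the path-labeled coordinates $J_i\cong\prod_{\tau\colon i\rightsquigarrow j}V_j^{(\tau)}$ supplied in the excerpt. Reading off the formula $(I_j)_a(\tau^*)=\lambda^*$ when $\tau=\lambda a$ (and $0$ otherwise), one obtains that $J_a$ sends a tuple $(v_\tau)_{\tau\colon i\rightsquigarrow j}$ to the tuple whose $\lambda$-entry (for $\lambda\colon l\rightsquigarrow j$) equals $v_{\lambda a}$; this is the only piece of bookkeeping to perform carefully, because it requires being consistent about dualizing and about the right-to-left convention for path concatenation.

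With this description in hand, I would compute both sides on an arbitrary $x\in M_i$ and compare them coordinate by coordinate. By the definition of $\varphi$, the $\lambda$-entry of $(J_a\varphi_i)(x)$ is $f_j\bigl((\lambda a)(x)\bigr)$, while the $\lambda$-entry of $(\varphi_l M_a)(x)$ is $f_j\bigl(\lambda(M_ax)\bigr)$. The convention $\tau(x)=M_{b_1}\cdots M_{b_s}(x)$ for $\tau=b_1\cdots b_s$ immediately yields $(\lambda a)(x)=\lambda(M_ax)$, that is, the $M$-action respects composition of paths; consequently the two $\lambda$-entries coincide for every $\lambda$, and the desired square commutes. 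I expect no serious obstacle: once $J_a$ has been unpacked in path coordinates the identity reduces to associativity of the $M$-action, which matches the paper's description of the lemma as straightforward.
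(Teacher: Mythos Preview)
Your proposal is correct and is exactly the direct verification the paper has in mind; the paper itself gives no proof, stating only that the lemma is straightforward. Your unpacking of $J_a$ in path-labeled coordinates and the reduction to the identity $(\lambda a)(x)=\lambda(M_a x)$ is precisely the intended computation.
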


\begin{proposition} The subspace $\ker\Phi_{(M,f)} = \oplus_{i\in
Q_0}\ker\varphi_i$ is the maximal $\Bbbk Q$-submodule of $M$
contained in $\ker{f}$.
\end{proposition}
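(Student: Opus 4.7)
The plan is to verify two properties: first, that $\ker\Phi_{(M,f)}$ is itself a subrepresentation of $M$ contained in $\ker f$; and second, that any subrepresentation with this property must lie inside $\ker\Phi_{(M,f)}$.

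First I would note that $\ker\Phi_{(M,f)}$ is a subrepresentation because $\Phi_{(M,f)}$ is a morphism of representations by the preceding lemma, and the kernel of any morphism of representations is a subrepresentation (componentwise). The inclusion $\ker\Phi_{(M,f)}\subseteq\ker f$ comes from isolating the factor of $\varphi_i$ indexed by the trivial path $\tau = e_i$, which is precisely $f_i\cdot\idd_{M_i} = f_i$. Hence $x\in\ker\varphi_i$ forces $f_i(x)=0$.

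For maximality, let $N\subseteq M$ be any subrepresentation with $N_i\subseteq\ker f_i$ for every $i\in Q_0$. Pick $x\in N_i$ and an arbitrary path $\tau = a_1\ldots a_k: i\rightsquigarrow j$ in $Q$. Since $N$ is closed under each structure map $M_{a_\ell}$, applying these maps successively yields $\tau(x) = M_{a_1}\ldots M_{a_k}(x)\in N_j\subseteq\ker f_j$, so $f_j\tau(x) = 0$. As $\tau$ was arbitrary, the product defining $\varphi_i(x)$ vanishes in every component, giving $x\in\ker\varphi_i$. Thus $N\subseteq\ker\Phi_{(M,f)}$, which is the required maximality.

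Neither step involves any real obstacle; the proof is a direct unpacking of the definition of $\varphi_i$ together with the observation that a subrepresentation is by definition closed under the action of every path.
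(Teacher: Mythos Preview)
Your proof is correct and follows essentially the same approach as the paper: both use the preceding lemma to see that $\ker\Phi_{(M,f)}$ is a submodule, observe the inclusion in $\ker f$ via the trivial-path component, and establish maximality by showing that any submodule $N\subseteq\ker f$ is closed under paths and hence annihilated by every $f_j\tau$. Your version is in fact slightly more explicit in spelling out the second step.
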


\begin{proof} It follows from Lemma 1.1 that $\ker\Phi_{(M,f)}$ is a
$\Bbbk Q$-submodule of $M$. One also easily observes that $\ker{\Phi_{(M,f)}}\subseteq\ker{f}$. Now, let $U$ be a $\Bbbk Q$-submodule of
$M$ contained in $\ker{f}$. For each $\tau:i\rightsquigarrow j$ we
then have $\tau U_i = \tau e_iU = \tau U = e_j\tau U \subseteq U_j$.
This implies that $f_j(\tau\cdot x) = 0$, for all $x\in U, j\in Q_0$
and for all paths $\tau$, i. e. $U\subseteq\ker\Phi_{(M,f)}$.
\end{proof}

\begin{corollary} The map $\Phi_{(M,f)}:M\rightarrow J$ is injective if and only if
the pair $(M,f)$ is stable.
\end{corollary}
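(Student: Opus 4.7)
The plan is to read off the corollary directly from Proposition 1.2 and the definition of stability, with essentially no extra work. The previous proposition identifies $\ker\Phi_{(M,f)}$ as the (unique) maximal $\Bbbk Q$-submodule of $M$ contained in $\ker f = \bigoplus_{i\in Q_0}\ker f_i$; on the other hand, stability of $(M,f)$ is, by definition, the statement that the zero subrepresentation is the only $\Bbbk Q$-submodule of $M$ landing in $\ker f$. So the two conditions are tautologically equivalent.

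Concretely, I would argue both implications as follows. For the ``only if'' direction, assume $\Phi_{(M,f)}$ is injective and let $N\subseteq M$ be a subrepresentation with $N_i\subseteq\ker f_i$ for every $i$. By Proposition 1.2 any such $N$ is contained in $\ker\Phi_{(M,f)}$, which is zero by hypothesis; hence $N=0$ and $(M,f)$ is stable. For the ``if'' direction, assume $(M,f)$ is stable. Then $\ker\Phi_{(M,f)}$ is itself a $\Bbbk Q$-submodule of $M$ contained in $\ker f$ (again by Proposition 1.2, which in particular records that it is a submodule sitting inside $\ker f$), so stability forces $\ker\Phi_{(M,f)}=0$, i.e.\ $\Phi_{(M,f)}$ is injective.

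There is no real obstacle here: the content has already been absorbed into Proposition 1.2, and the corollary is just a dictionary entry matching the categorical description of $\ker\Phi_{(M,f)}$ with the elementary formulation of stability. The only minor subtlety worth mentioning is the mild abuse in the stability definition (the condition should of course be $N_i\subseteq\ker f_i$, not $M_i\subseteq\ker f_i$), but the argument is unaffected.
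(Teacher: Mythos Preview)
Your proof is correct and matches the paper's approach: the paper states the corollary with no proof at all, treating it as an immediate consequence of Proposition~1.2 and the definition of stability, which is exactly what you have spelled out. Your remark about the typo in the stability definition (it should read $N_i\subseteq\ker f_i$) is also apt.
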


This observation is crucial for the construction. Associated to the maps
$\varphi_i$ are (probably infinite) matrices with rows
$f_{iq}\tau$, where $\tau: j\rightsquigarrow i$, $q\in\{1,\ldots,\zeta_i\}$, and $f_{iq}$ stands for the $q$-th row of the matrix of $f_i$.
The map is injective if and only if one of
$\alpha_i\times\alpha_i$ minors of its matrix is nonzero, i.e., for some
$\tau_1,\ldots,\tau_{\alpha_i}$ and $q_1,\ldots,q_{\alpha_i}$, we have
$$\det{
\begin{pmatrix}
f_{iq_1}\tau_1\\
f_{iq_2}\tau_2\\
\vdots\\
f_{iq_{\alpha_i}}\tau_{\alpha_i}
\end{pmatrix} }\ne 0.$$
Therefore, a pair $(M, f)$ is stable if and only if for each $i\in
Q_0$ there is a set of numbers $q^{(i)}_1,\ldots,q^{(i)}_{\alpha_i}$ and a set of distinct paths
$\tau^{(i)}_1,\ldots,\tau^{(i)}_{\alpha_i}$ with
$$D^{(q^{(1)}_1,\ldots,q^{(1)}_{\alpha_1};\ldots;
q^{(n)}_1,\ldots,q^{(n)}_{\alpha_n})}_{(\tau^{(1)}_1,\ldots,\tau^{(1)}_{\alpha_1};\ldots;
\tau^{(n)}_1,\ldots,\tau^{(n)}_{\alpha_n})} := \det{
\begin{pmatrix}
f_{iq^{(1)}_1}\tau^{(1)}_1\\
f_{iq^{(1)}_2}\tau^{(1)}_2\\
\vdots\\
f_{iq^{(1)}_{\alpha_1}}\tau^{(1)}_{\alpha_1}
\end{pmatrix} }\cdot\ldots\cdot\det{
\begin{pmatrix}
f_{iq^{(n)}_1}\tau^{(n)}_1\\
f_{iq^{(n)}_2}\tau^{(n)}_2\\
\vdots\\
f_{iq^{(n)}_{\alpha_n}}\tau^{(n)}_{\alpha_n}
\end{pmatrix} }\ne 0.$$
Thus $\repq^s(Q, \alpha, \zeta)$ is covered by open
subsets $U^{(q^{(1)}_1,\ldots,q^{(1)}_{\alpha_1};\ldots;
q^{(n)}_1,\ldots,q^{(n)}_{\alpha_n})}_{(\tau^{(1)}_1,\ldots,\tau^{(1)}_{\alpha_1};\ldots;
\tau^{(n)}_1,\ldots,\tau^{(n)}_{\alpha_n})}$, which are the nonzero
loci of the corresponding
$D^{(q^{(1)}_1,\ldots,q^{(1)}_{\alpha_1};\ldots;
q^{(n)}_1,\ldots,q^{(n)}_{\alpha_n})}_{(\tau^{(1)}_1,\ldots,\tau^{(1)}_{\alpha_1};\ldots;
\tau^{(n)}_1,\ldots,\tau^{(n)}_{\alpha_n})}$, and consequently the
moduli space is covered by the quotients
$V^{(q^{(1)}_1,\ldots,q^{(1)}_{\alpha_1};\ldots;
q^{(n)}_1,\ldots,q^{(n)}_{\alpha_n})}_{(\tau^{(1)}_1,\ldots,\tau^{(1)}_{\alpha_1};\ldots;
\tau^{(n)}_1,\ldots,\tau^{(n)}_{\alpha_n})} :=
U^{(q^{(1)}_1,\ldots,q^{(1)}_{\alpha_1};\ldots;
q^{(n)}_1,\ldots,q^{(n)}_{\alpha_n})}_{(\tau^{(1)}_1,\ldots,\tau^{(1)}_{\alpha_1};\ldots;
\tau^{(n)}_1,\ldots,\tau^{(n)}_{\alpha_n})}/\!\!/GL(\alpha)$. In next section we shall prove that this covering admits a finite subcovering.

\section{Skeleta of stable pairs}

Let $Q$, $\alpha$ and $\zeta$ be as before. Consider a quiver $Q^{\zeta}$ with $Q^{\zeta}_0 =
Q_0\cup\{\infty\}$, the arrows of $Q^{\zeta}$ being those of $Q$
together with $\zeta_i$ arrows from each $i\in Q_0$ to $\infty$. Denote the new arrows by $f_{iq}$, where $i$ indicates the tail of an arrow and $q\in\{1,\ldots,\zeta_i\}$. We
also extend the dimension vector $\alpha$ to $\alpha^{\zeta}$,
setting $\alpha^{\zeta}_i = \alpha_i$ for $i = 1,\ldots,n$ and
$\alpha^{\zeta}_{\infty} = 1$.

Observe that the sets $\repq(Q,\alpha,\zeta)$ and $\repq(Q^{\zeta},\alpha^{\zeta})$ may be identified in a $\GL(\alpha)$-invariant way. Indeed,
$$\repq(Q,\alpha,\zeta)=\repq(Q,\alpha)\oplus\bigoplus_{i\in Q_0}\HOMM_{\Bbbk}(\Bbbk^{\alpha_i},\Bbbk^{\zeta_i})\cong$$
$$\cong
\repq(Q,\alpha)\oplus\bigoplus_{i\in Q_0}\HOMM_{\Bbbk}(\Bbbk^{\alpha_i},\Bbbk)^{\zeta_i}=\repq(Q^{\zeta},\alpha^{\zeta}).$$
In terms of matrices this isomorphism has the following interpretation. Let $(M,f)$ be a framed representation and $\widetilde{M}$ be the corresponding representation of $Q^{\zeta}$. Then matrices of $\widetilde{M}_{f_{iq}}$, $q = 1,\ldots,\zeta_i$ are rows of the matrix of $f_i$ (i.e., what was denoted by $f_{iq}$ in Section 1). This justifies our seeming abuse of notation.

\begin{example} {\rm Let $Q = \xymatrix{1\ar@/_1pc/[rr]_a\ar[r]^b & 2\ar[r]^c & 3}$
and $\zeta = (1,2,3)$. Then
we have
$$\xymatrix{{}\save[]-<0cm, 0.6cm>*\txt{$Q^{\zeta} =\ $}\restore & & {}\save[]+<0cm, 0.5cm>*\txt{$\infty$}\restore &\\
&1\ar@/_1pc/[rr]_a\ar[r]^b\ar@/^1pc/[ur]+<-0.3cm,0.44cm>|{f_{11}} & 2\ar[r]^c\ar@/^0.5pc/[u]+<-0.1cm,0.4cm>|{f_{21}}\ar@/_0.5pc/[u]+<0.1cm,0.4cm>|{f_{22}} & 3\ar@/_0.2pc/[ul]+<0.2cm,0.44cm>|{f_{31}}\ar@/_1pc/[ul]+<0.2cm,0.44cm>|{f_{32}}
\ar@/_1.8pc/[ul]+<0.2cm,0.44cm>|{f_{33}}}$$
Furthermore, if $\alpha = (2,2,1)$ and
$$\xymatrix{{}\save[]-<0cm, 0.6cm>*\txt{$(M,f) =\ $}\restore & & \Bbbk & & \Bbbk^2 & & \Bbbk^3\\
&&\Bbbk^2\ar@/_1pc/[rrrr]|{M_a}\ar[rr]|{M_b}\ar[u]^{\left(\begin{smallmatrix}1 & 5\end{smallmatrix}\right)} & & \Bbbk^2\ar[rr]|{M_c}\ar[u]^{\left(\begin{smallmatrix}3 & 1\\ 4& 2\end{smallmatrix}\right)} & & \Bbbk
\ar[u]^{\left(\begin{smallmatrix}3\\ 0\\ 1\end{smallmatrix}\right)}}$$
for some $M_a$, $M_b$ and $M_c$, then the corresponding representation $\widetilde{M}$ of $Q^{\zeta}$ is as follows
$$\xymatrix{{}\save[]-<0cm, 0.6cm>*\txt{$\widetilde{M} =\ $}\restore & && {}\save[]+<0cm, 1.0cm>*\txt{$\Bbbk$}\restore &&\\
&\Bbbk^2\ar@/_1pc/[rrrr]|{M_a}\ar[rr]|{M_b}\ar@/^1pc/[urr]+<-0.3cm,0.94cm>^{\left(\begin{smallmatrix}1 & 5\end{smallmatrix}\right)} && \Bbbk^2\ar[rr]|{M_c}\ar@/^0.5pc/[u]+<-0.1cm,0.9cm>^{\left(\begin{smallmatrix}3 & 1\end{smallmatrix}\right)}\ar@/_0.5pc/[u]+<0.1cm,0.9cm>_{\left(\begin{smallmatrix}4 & 2\end{smallmatrix}\right)} && \Bbbk\ar@/_0.2pc/[ull]+<0.2cm,0.94cm>|(.35){(3)}\ar@/_1pc/[ull]+<0.2cm,0.94cm>|{(0)}
\ar@/_1.8pc/[ull]+<0.2cm,0.94cm>|(0.7){(1)}}$$}
\end{example}

The representation $J$ may also be extended in a natural way to a representation $\widetilde{J}$ of $Q^{\zeta}$. Set $\widetilde{J}_i = J_i$ and $\widetilde{J}_a = J_a$, for $i\in Q_0$ and $a\in Q_1$. Set further $\widetilde{J}_{\infty} = \Bbbk$ and $\bigoplus_{b: i\rightarrow\infty}J_b: J_i\rightarrow\Bbbk^{\zeta_i}$ be the projection $\prod_{\tau:i\rightsquigarrow j}V_j^{(\tau)}\rightarrow V_i^{(e_i)}$, for each $i\in Q_0$. A straightforward calculation shows that thus constructed $\widetilde{J}$ is isomorphic to the representation $I_{\infty}$ of $Q^{\zeta}$. In particular, elements of $\widetilde{J}_i$ may be represented as (possibly infinite) formal series in $(f_{jq}\tau)^*$, for $j\in Q_0$, $q = 1,\ldots,\zeta_j$, and $\tau:i\rightsquigarrow j$. This implies that paths in $Q^{\zeta}$ may be viewed as linear functions on $\widetilde{J}$.

For a framed representation $(M,f)\in\repq(Q,\alpha,\zeta)$ consider the corresponding representation $\widetilde{M}$ of $Q^{\zeta}$. The map $\Phi_{(M,f)}$ induces then a morphism $\widetilde{\Phi}_{\widetilde{M}}: \widetilde{M}\rightarrow\widetilde{J}$ of representations of $Q^{\zeta}$ defined by $\widetilde{\varphi}_{i} = \varphi_i$, for $i\in Q_0$, $\widetilde{\varphi}_{\infty} = \idd_{\Bbbk}$. It follows from Corollary 1 that a pair $(M,f)$ is stable if and only if the corresponding map $\widetilde{\Phi}_{\widetilde{M}}$ is an embedding.


\medskip

{\bf Definition.} By a {\it $J$-skeleton of a stable pair $(M, f)$} we understand a
set $\mathfrak{S}$ of paths of nonzero length in $Q^{\zeta}$ ending
in $\infty$ with the following properties:
\begin{itemize}
\item[(1)] Restrictions of paths in $\mathfrak{S}$ together with $e_\infty$ give a basis
in $\widetilde{\Phi}_{\widetilde{M}}\left(\widetilde{M}\right)^*$.
\item[(2)] Whenever $\tau a$ is in $\mathfrak{S}$ and $\tau\ne e_{\infty}$,
$\tau$ is also in $\mathfrak{S}$.
\end{itemize}
The {\it dimension vector} of a $J$-skeleton $\mathfrak{S}$ is the
dimension vector $\alpha = \underline{\dim}(\mathfrak{S})$ of any
stable pair with $J$-skeleton $\mathfrak{S}$. It is easy to see that
$\alpha_i$ equals the number of paths in $\mathfrak{S}$ ending on
$f_{iq}$ for any $q = 1,\ldots,\zeta_i$. For $i\in Q_0$ we set
$\mathfrak{S}_i = \left\{f_{iq}\tau\in\mathfrak{S}\right\}$.

\begin{lemma} Every stable pair $(M,f)$ has a $J$-skeleton.
\end{lemma}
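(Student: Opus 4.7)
The proof goes by induction on path length, constructing a prefix-closed basis layer by layer; the key observation is that prolonging basis paths by one arrow on the right advances from length $\ell-1$ to length $\ell$. By Corollary~1.3 the map $\widetilde{\Phi}_{\widetilde{M}}$ is an embedding, so I identify $\widetilde{\Phi}_{\widetilde{M}}(\widetilde{M})^{*}$ with $\widetilde{M}^{*}$ and view the restriction $\sigma|_{\widetilde{M}}$ of any path $\sigma$ ending at $\infty$ as a linear functional on $\widetilde{M}$ supported on the summand $\widetilde{M}_{t(\sigma)}$. For $\ell\ge 0$ define
\[
W_{\ell}:=\lspan\bigl\{\sigma|_{\widetilde{M}}\mid\sigma\text{ a path in }Q^{\zeta}\text{ of length }\le\ell,\ h(\sigma)=\infty\bigr\}\subseteq\widetilde{M}^{*},
\]
so $W_{0}=\Bbbk\cdot e_{\infty}|_{\widetilde{M}}$. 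If $x\in\widetilde{M}$ is annihilated by every such $\sigma$, then every coordinate of $\widetilde{\Phi}_{\widetilde{M}}(x)\in\widetilde{J}$ vanishes, hence $x=0$; therefore $\bigcup_{\ell}W_{\ell}=\widetilde{M}^{*}$, and the filtration stabilises at some finite $\ell_{0}$ with $W_{\ell_{0}}=\widetilde{M}^{*}$.

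I build an ascending chain $\mathfrak{S}^{(0)}\subseteq\mathfrak{S}^{(1)}\subseteq\dots$ of finite sets of positive-length paths ending at $\infty$ with $\mathfrak{S}^{(\ell)}$ consisting of paths of length $\le\ell$, closed under taking nonempty left-prefixes, and such that the restrictions of $\{e_{\infty}\}\cup\mathfrak{S}^{(\ell)}$ form a basis of $W_{\ell}$. Start with $\mathfrak{S}^{(0)}=\emptyset$. The crucial identity powering the induction step is
\[
W_{\ell}=W_{\ell-1}+\lspan\bigl\{(\sigma\cdot a)|_{\widetilde{M}}\mid\sigma\in\{e_{\infty}\}\cup\mathfrak{S}^{(\ell-1)},\ a\in Q^{\zeta}_{1},\ h(a)=t(\sigma)\bigr\}.
\]
To see it, decompose an arbitrary length-$\ell$ path ending at $\infty$ as $\sigma'\cdot a$ with $\sigma'$ of length $\ell-1$; expanding $\sigma'|_{\widetilde{M}}$ as a linear combination of the inductive basis of $W_{\ell-1}$ and composing on the right with the linear map $\widetilde{M}_{a}$ expresses $(\sigma'\cdot a)|_{\widetilde{M}}$ as the same linear combination of the $(\sigma\cdot a)|_{\widetilde{M}}$ for $\sigma\in\{e_{\infty}\}\cup\mathfrak{S}^{(\ell-1)}$. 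Now enlarge $\mathfrak{S}^{(\ell-1)}$ to $\mathfrak{S}^{(\ell)}$ by adjoining from these candidates $\sigma\cdot a$ exactly those needed to complete a basis of $W_{\ell}$. Prefix-closure is preserved because the only proper nonempty left-prefix of $\sigma\cdot a$ is $\sigma$, which lies in $\{e_{\infty}\}\cup\mathfrak{S}^{(\ell-1)}\subseteq\{e_{\infty}\}\cup\mathfrak{S}^{(\ell)}$.

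Setting $\mathfrak{S}:=\mathfrak{S}^{(\ell_{0})}$ then produces a $J$-skeleton of $(M,f)$. The only genuinely nontrivial step is verifying the displayed identity for $W_{\ell}$; once this reduces the construction to ordinary basis extension at each length, termination and prefix-closure follow directly from the finite-dimensionality of $\widetilde{M}$ and from the fact that we always extend by prolonging an already-chosen path.
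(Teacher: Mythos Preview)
Your proof is correct and follows essentially the same strategy as the paper: build the skeleton by successively prolonging already-chosen paths by one arrow on the right, using that $(\sigma a)|_{\widetilde M}=\sigma|_{\widetilde M}\circ\widetilde M_a$ to show the prolongations span. The only organisational difference is that you stratify by path length and prove the spanning identity $W_\ell=W_{\ell-1}+\lspan\{(\sigma a)|_{\widetilde M}\}$ at each stage, whereas the paper runs a greedy construction and verifies spanning afterwards via a shortest-counterexample argument; the underlying idea is identical. (One wording quibble: $\sigma\cdot a$ can have several proper nonempty left-prefixes, not just $\sigma$, but since condition~(2) only demands the immediate prefix and $\mathfrak{S}^{(\ell-1)}$ is already prefix-closed by induction, your conclusion stands.)
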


\begin{proof} Let $(M,f)$ be a stable pair. Let also $\widetilde{M}$ be the corresponding representation of $Q^{\zeta}$. Denote by $N$ its image
$\widetilde{\Phi}_{\widetilde{M}}(\widetilde{M})\subseteq\widetilde{J}$. First of all, we need to show that restrictions of
paths in $Q$ generate $\widetilde{\Phi}_{\widetilde{M}}(\widetilde{M})^*$ as a vector space. Let $\varpi_1,\ldots,\varpi_m$ be a basis of $\widetilde{\Phi}_{\widetilde{M}}(\widetilde{M})$, where $\varpi_i = \sum c_{iq,\tau}(f_{iq}\tau)^*$. Observe that, for some $t$,
$$\dim\left(\lspan\left\{\sum_{f_{i_q}\tau,\ l(\tau)\leqslant t}c_{i_q,\tau}(f_{iq}\tau)^*\right\}\right) = m,$$
where $l(\tau)$ stands for the length of a path $\tau$. Then, obviously, the linear span of all paths of length not greater than $t$ generates $\widetilde{\Phi}_{\widetilde{M}}(\widetilde{M})^*$.

We see now, that restrictions of paths in $Q$ give a basis of $\Phi_{\widetilde{M}}(\widetilde{M})^*$, but less evident
is condition (2).

 We construct its $J$-skeleton inductively.
We start by taking, for each $i\in Q_0$, a maximal tuple
$f_{iq_1},\ldots,f_{iq_t}$ with $f_{iq_1}|_N,\ldots,f_{iq_t}|_N$
linearly independent. Further, on each step we add a path $\tau a$,
where $\tau$ is one of the paths we added before, $a$ is an arrow,
and $\tau a|_{N}$ does not lie in the linear span of restrictions of all the preceding
paths. We proceed until we obtain a maximal linearly independent
set of restrictions $\Gamma$. It should be proved, however, that it
is a basis of $N^*$. Let
$\tau|_{N}\notin\lspan\left\{\Gamma\right\}$. If none of its final
subpaths restricted to $N$ is in $\lspan\left\{\Gamma\right\}$, then
we have found $f_{lq_l}$ whose restriction is not in
$\lspan\left\{\Gamma\right\}$, a contradiction. Thus $\tau =
\mu\nu$, where $\mu|_N\in\lspan\left\{\Gamma\right\}$, i.e., $\mu|_N =
\sum_{\kappa\in\Gamma}c_{\kappa}\kappa|_N$. Consequently, $\tau|_{N}
= \sum_{\kappa\in\Gamma}c_{\kappa}\kappa|_N\nu|_N$. But by
maximality of $\Gamma$ each $\kappa|_N\nu|_N$ is in
$\lspan\left\{\Gamma\right\}$.
\end{proof}

Obviously, $\repq^s(Q, \alpha,\zeta) =
\bigcup_{\mathfrak{S}}\repq(Q, \mathfrak{S})$, where
$$\repq(Q,
\mathfrak{S}) = \left\{(M,f)\in\repq^s(Q^{\zeta},
\alpha,\zeta)\mid\mbox{$(M,f)$ has $J$-skeleton
$\mathfrak{S}$}\right\}$$ and $\mathfrak{S}$ run through all
possible $J$-skeleta for dimension vectors $\alpha$ and $\zeta$.
Now note that if $\mathfrak{S} =
\left\{f_{1q^{(1)}_{1}}\tau^{(1)}_1,\ldots,f^{(1)}_{q_{1\alpha_1}}\tau^{(1)}_{\alpha_1},\ldots,
f_{nq^{(n)}_{1}}\tau^{(n)}_1,\ldots,f^{(n)}_{q_{n\alpha_n}}\tau^{(n)}_{\alpha_n}\right\}$,
then $\repq(Q, \mathfrak{S})$ is exactly the open subset
$U^{(q^{(1)}_1,\ldots,q^{(1)}_{\alpha_1};\ldots;
q^{(n)}_1,\ldots,q^{(n)}_{\alpha_n})}_{(\tau^{(1)}_1,\ldots,\tau^{(1)}_{\alpha_1};\ldots;
\tau^{(n)}_1,\ldots,\tau^{(n)}_{\alpha_n})}$. Observe that there are only
finitely many $J$-skeleta, since lengths of paths in an $\alpha$-dimensional $J$-skeleton are bounded by $\max_i\alpha_i$. Therefore, we obtain a finite covering of $\repq^s(Q,\alpha,\zeta)$ by subsets of form $U^{(q^{(1)}_1,\ldots,q^{(1)}_{\alpha_1};\ldots;
q^{(n)}_1,\ldots,q^{(n)}_{\alpha_n})}_{(\tau^{(1)}_1,\ldots,\tau^{(1)}_{\alpha_1};\ldots;
\tau^{(n)}_1,\ldots,\tau^{(n)}_{\alpha_n})}$.

\section{Embedding of the moduli space}

Let $\Gamma(\alpha)$ be the set of all paths occurring in
$J$-skeleta with dimension vector $\alpha$ and
$\widetilde{\Gamma}(\alpha)$ be the union of $\Gamma$ with
$\left\{\tau a\mid\tau\in\Gamma(\alpha), a\in Q_1, h(a) = t(\tau)\right\}$. Let also
$\widehat{J} =
\bigoplus_{\tau\in\widetilde{\Gamma}(\alpha)}V_{h(\tau)}^{(\tau)}$.
Note that $\widehat{J}$ has a natural $Q_0$-grading.
Indeed, one may set $\widehat{J}_i =
\bigoplus_{\tau\in\widetilde{\Gamma}(\alpha), t(\tau) =
i}V_{h(\tau)}^{(\tau)}$. Consider the map $\widehat{\Phi}_{(M,f)}:
M\rightarrow\widehat{J}$ defined as follows:
$$\widehat{\varphi}_i =
\bigoplus_{\tau\in\widetilde{\Gamma}(\alpha)}f_{h(\tau)}\tau :
M_i\rightarrow\bigoplus_{\tau\in\widetilde{\Gamma}(\alpha), t(\tau)
= i}V_{h(\tau)}^{(\tau)} = \widehat{J}_i.$$ It is obvious that a
pair $(M,f)$ is stable if and only if $\widehat{\Phi}_{(M,f)}$ is
injective. The advantage of $\widehat{\Phi}_{(M,f)}$ is that it
maps to a finite dimensional vector space.

We now need to fix notation that will be used throughout the rest of the paper. Let
$\widetilde{\Gamma}_i(\alpha)$ be a subset of $\widetilde{\Gamma}(\alpha)$ consisting of paths starting at $i$. Let further  $(B^{(1)},\ldots,B^{(n)})\in\prod_{i =
1}^n\Mat_{\left|\widetilde{\Gamma}_i(\alpha)\right|\times
\alpha_i}(\Bbbk)$. By definition of $\widetilde{\Gamma}(\alpha)$ rows of $B^{(i)}$ correspond to paths in $Q^{\zeta}$ starting at $i$. So, instead of using numerical indices we denote, for a path $\tau$ in $Q$, the row of $B^{(t(\tau))}$ corresponding to
$\tau$ by $B_{\tau}$.
Let, furthermore, $\mathfrak{T}$ be a subset of $\widetilde{\Gamma}(\alpha)$ (not necessary a skeleton). Denote by $\mathfrak{T}_i$ the subset in $\mathfrak{T}$ consisting of paths starting at $i$. Set $\underline{\dim}(\mathfrak{T}) = (|\mathfrak{T}_1|, \ldots,|\mathfrak{T}_n|)$. Finally, for a collection $\mathfrak{T}$ with dimensional vector $\alpha$ let $B(\mathfrak{T}_i)$ be the submatrix of $B^{(i)}$ composed of
rows corresponding to paths in $\mathfrak{T}_i$ 
and $U(\mathfrak{T})$ be the open subset in $\prod_{i = 1}^n\Mat_{\left|\widetilde{\Gamma}_i(\alpha)\right|\times \alpha_i}(\Bbbk)$ consisting of tuples $(B^{(1)},\ldots,B^{(n)})$ with all $B(\mathfrak{T}_i)$ nondegenerate.

For a $Q_0$-graded space $W = \bigoplus_{i\in Q_0}W_i$ define $\IHom_{\alpha}(W) := \prod_{i\in Q_0}\IHom_{\alpha_i}(W_i)$, where $\IHom_{\alpha_i}(W_i)$ is the set
of all injective linear maps from the vector space $\Bbbk^{\alpha_i}$ to $W_i$. We also define
$\Grass_{\alpha}(W)$ as the product of Grassmannians $\prod_{i\in
Q_0}\Grass_{\alpha_i}(W_i)$. It is easy to see that
$\Grass_{\alpha}(W)$ is a quotient of $\IHom_{\alpha}(W)$ by the
natural action of $GL(\alpha)$. We now introduce a map
$$\widehat{\Phi}: \repq^s(Q,\alpha,\zeta)\rightarrow\IHom_{\alpha}(\widehat{J}),\quad
(M,f)\mapsto\widehat{\Phi}_{(M,f)}.$$

Identify $\IHom_{\alpha}(\widehat{J})$ with an open subset
$$\bigcup_{\mathfrak{T}:\,\underline{\dim}{\mathfrak{T}} = \alpha}U(\mathfrak{T})\subseteq\prod_{i =
1}^n\Mat_{\left|\widetilde{\Gamma}_i(\alpha)\right|\times
\alpha_i}(\Bbbk).$$
It is easy to see that $\TrueIm\widehat{\Phi}$ is contained in
$$Z(\alpha) := \bigcup_{\begin{smallmatrix}\mbox{\small$\mathfrak{S}$ is a $J$-skeleton}\\
\underline{\dim}(\mathfrak{S}) = \alpha
\end{smallmatrix}}U(\mathfrak{S}).$$

{\bf Definition.} For a $J$-skeleton $\mathfrak{S}$ set $X(\mathfrak{S}) = \TrueIm(\widehat{\Phi})\cap U(\mathfrak{S})$.

\begin{proposition} The image of $\widehat{\Phi}$ is a locally closed subvariety in
$\IHom_{\alpha}(\widehat{J})$.
\end{proposition}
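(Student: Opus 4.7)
The plan is to exhibit $\TrueIm(\widehat{\Phi})$ as a union of closed subsets of the open sets $U(\mathfrak{S})$ covering it; since being locally closed is a local property, this suffices. By Lemma 2.1, every stable pair $(M,f)$ admits a $J$-skeleton $\mathfrak{S}$ with $\underline{\dim}(\mathfrak{S})=\alpha$, and skeleton property (1) translates directly into nondegeneracy of the $\alpha_i\times\alpha_i$ matrix $B(\mathfrak{S}_i)$ for $B=\widehat{\Phi}_{(M,f)}$. Thus $\TrueIm(\widehat{\Phi})\subseteq\bigcup_{\mathfrak{S}}U(\mathfrak{S})$, which is open in $\IHom_{\alpha}(\widehat{J})$, and it is enough to show that $X(\mathfrak{S})=\TrueIm(\widehat{\Phi})\cap U(\mathfrak{S})$ is closed in $U(\mathfrak{S})$ for each $J$-skeleton $\mathfrak{S}$ with $\underline{\dim}(\mathfrak{S})=\alpha$.

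Fix such an $\mathfrak{S}$ and a point $B\in U(\mathfrak{S})$. The key step is to recover a candidate framed representation regularly from $B$. For each arrow $a\in Q_1$ with $t(a)=i$ and $h(a)=j$, the definition of $\widetilde{\Gamma}(\alpha)$ together with $\mathfrak{S}_j\subseteq\Gamma(\alpha)$ guarantees that $\tau a\in\widetilde{\Gamma}_i(\alpha)$ for every $\tau\in\mathfrak{S}_j$, so the stacked rows $(B_{\tau a})_{\tau\in\mathfrak{S}_j}$ form a well-defined $\alpha_j\times\alpha_i$ block. Since $B(\mathfrak{S}_j)$ is invertible on $U(\mathfrak{S})$, the assignment $M_a(B):=B(\mathfrak{S}_j)^{-1}(B_{\tau a})_{\tau\in\mathfrak{S}_j}$ is regular, and together with $f_{jq}(B):=B_{f_{jq}}$ this yields a regular morphism $\psi_{\mathfrak{S}}\colon U(\mathfrak{S})\to\repq(Q,\alpha,\zeta)$. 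If $B$ already lies in $\TrueIm(\widehat{\Phi})$, then uniqueness of the linear system forces $\psi_{\mathfrak{S}}(B)$ to agree with any preimage of $B$, so that $\widehat{\Phi}\circ\psi_{\mathfrak{S}}$ restricts to the identity on $X(\mathfrak{S})$.

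Therefore $B\in X(\mathfrak{S})$ if and only if $\widehat{\Phi}(\psi_{\mathfrak{S}}(B))=B$, which unravels to the system $B_\tau=f_{jq}(B)\,M_{a_1}(B)\cdots M_{a_k}(B)$ indexed by paths $\tau=f_{jq}a_1\cdots a_k\in\widetilde{\Gamma}(\alpha)$. Since $\widetilde{\Gamma}(\alpha)$ is finite (path lengths in any $\alpha$-dimensional skeleton being bounded by $\max_i\alpha_i$) and the only denominators appearing are units on $U(\mathfrak{S})$, this is a finite system of polynomial equations, so $X(\mathfrak{S})$ is closed in $U(\mathfrak{S})$. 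A minor bookkeeping check is that the equations indexed by $\tau\in\mathfrak{S}$ are automatic: this follows by induction on the length of $\tau$, using property (2) of the skeleton (so that the truncated path stays in $\mathfrak{S}$) together with the defining relation of $M_a(B)$, so the nontrivial content lies in the equations indexed by $\widetilde{\Gamma}(\alpha)\setminus\mathfrak{S}$. I expect this bookkeeping to be the only subtlety; everything else reduces to the fact that on $U(\mathfrak{S})$ the representation data can be recovered uniquely and polynomially from $B$, whence $\TrueIm(\widehat{\Phi})$ is locally closed in $\IHom_{\alpha}(\widehat{J})$.
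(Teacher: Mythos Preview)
Your proof is correct and follows essentially the same route as the paper: recover the underlying representation from $B$ on $U(\mathfrak{S})$ via $M_a(B)=B(\mathfrak{S}_{h(a)})^{-1}B(\mathfrak{S}_{h(a)}a)$ and $f_{jq}(B)=B_{f_{jq}}$, then characterize $X(\mathfrak{S})$ by the resulting polynomial consistency conditions. The only cosmetic difference is that the paper writes the defining equations in the ``one-step'' form $B_{\tau a}=B_{\tau}\,M_a(B)$ for $\tau\in\Gamma(\alpha)$ (equation~(3.1)), whereas you package them as the single condition $\widehat{\Phi}\circ\psi_{\mathfrak{S}}=\mathrm{id}$, which unravels to the ``full-path'' relations; these are trivially equivalent. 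Two small remarks: the lemma you cite is numbered 2.2 in the paper (2.1 is an example), and your implicit extension of $\widehat{\Phi}$ to all of $\repq(Q,\alpha,\zeta)$ (not just stable pairs) is harmless since $B\in U(\mathfrak{S})$ forces injectivity and hence stability of $\psi_{\mathfrak{S}}(B)$ a posteriori.
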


\begin{proof} It is sufficient to show that each $X(\mathfrak{S})$ is closed in $U(\mathfrak{S})$. Fix a $J$-skeleton $\mathfrak{S}$. For an arrow $a\in Q_1$ set $\mathfrak{S} a = \left\{\tau a\mid \tau\in\mathfrak{S},\, h(a) = t(\tau)\right\}$. If a tuple of matrices
$(B^{(1)},\ldots, B^{(n)})\in U(\mathfrak{S})$ is in
$\TrueIm{\widehat{\Phi}}$, we can recover all the maps $M_a,\ a\in
Q_1$ of its inverse image $(M,f)$ since for all $a\in Q_1$:
$B(\mathfrak{S}_{ta}a) = B(\mathfrak{S}_{ta})M_a$ and hence $M_a =
B(\mathfrak{S}_{ta})^{-1}B(\mathfrak{S}_{ta}a)$. Now observe that
$(B^{(1)},\ldots, B^{(n)})$ belongs to the image of $\widehat{\Phi}$
whenever, for all $\tau\in\Gamma(\alpha)$ and $a$ with $ta =
h(\tau)$, $B_{\tau a} = B_{\tau}M_a$. Using the expression received
for $a$, we rewrite this as
$$B_{\tau{a}} = B_{\tau}B(\mathfrak{S}_{ta})^{-1}B(\mathfrak{S}_{ta}a).\eqno{(3.1)}$$
Collected together, all these equations define a Zarisky
closed subvariety $X(\mathfrak{S})$ in $U(\mathfrak{S})$. Gluing
them we obtain a closed subvariety
$X_0(\alpha)\subseteq Z(\alpha)$ that coincides with
$\TrueIm\widehat{\Phi}$. Consequently, $\TrueIm\widehat{\Phi}$
is a locally closed subvariety in
$\IHom_{\alpha}\left(\widehat{J}\right)$.
\end{proof}

From the proof of this proposition we infer

\begin{corollary} The map $\widehat{\Phi}:\repq^s(Q,\alpha,\zeta)\rightarrow
\TrueIm\widehat{\Phi} = X_0(\alpha)$ is a $GL(\alpha)$-equivariant
isomorphism of algebraic varieties.
\end{corollary}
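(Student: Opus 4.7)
The plan is to extract a global inverse morphism from the local reconstruction formulas already exhibited in the Proposition's proof; once that inverse is assembled, everything else is formal.

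First I would record $GL(\alpha)$-equivariance of $\widehat{\Phi}$, which is immediate from the definition of $\widehat{\varphi}_i$: replacing $(M,f)$ by $g\cdot(M,f)$ sends each entry $f_{h(\tau)}\tau$ on $M_i$ to $f_{h(\tau)}\tau g_i^{-1}$, so the matrix $B^{(i)}$ becomes $B^{(i)}g_i^{-1}$, which is precisely the natural $GL(\alpha)$-action on $\IHom_{\alpha}(\widehat{J})$. Surjectivity onto $X_0(\alpha)$ is the identification $X_0(\alpha)=\TrueIm\widehat{\Phi}$ established in the Proposition.

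Next I would construct a regular inverse. For each $J$-skeleton $\mathfrak{S}$ of dimension vector $\alpha$, the formulas
\[
M_a=B(\mathfrak{S}_{ta})^{-1}B(\mathfrak{S}_{ta}a),\quad a\in Q_1,
\]
together with the reading off of the framing components $f_{iq}$ from the appropriate rows of $B^{(i)}$, define a morphism $\Psi_{\mathfrak{S}}:U(\mathfrak{S})\cap X_0(\alpha)\to\repq^s(Q,\alpha,\zeta)$. Regularity is clear because $\det B(\mathfrak{S}_{ta})\ne 0$ on $U(\mathfrak{S})$; the image lands in $\repq^s$ since $\widehat{\Phi}_{\Psi_{\mathfrak{S}}(B)}=B$ has nondegenerate $\mathfrak{S}$-submatrix and hence is injective, so Corollary 1 gives stability. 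By Lemma 2, the charts $U(\mathfrak{S})\cap X_0(\alpha)$ cover $X_0(\alpha)$.

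Finally I would glue. On any overlap $U(\mathfrak{S})\cap U(\mathfrak{S}')\cap X_0(\alpha)$, both $\Psi_{\mathfrak{S}}$ and $\Psi_{\mathfrak{S}'}$ must produce the unique preimage of $B$ under $\widehat{\Phi}$, so they coincide and yield a global morphism $\Psi:X_0(\alpha)\to\repq^s(Q,\alpha,\zeta)$ satisfying $\widehat{\Phi}\circ\Psi=\idd$ (by the defining equations (3.1)) and $\Psi\circ\widehat{\Phi}=\idd$ (by construction). Equivariance of $\Psi$ is then forced by that of $\widehat{\Phi}$. The main delicate step is this gluing compatibility of the local inverses, and it reduces directly to the injectivity of $\widehat{\Phi}$ which is essentially built into the Proposition's proof.
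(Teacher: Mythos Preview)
Your argument is correct and follows essentially the same route as the paper: construct local inverses on each $U(\mathfrak{S})\cap X_0(\alpha)$ via $f_{iq}=B_{f_{iq}}$ and $M_a=B(\mathfrak{S}_{ta})^{-1}B(\mathfrak{S}_{ta}a)$, then glue. The only cosmetic difference is in the gluing step: the paper verifies directly that $B(\mathfrak{S}_{ta})^{-1}B(\mathfrak{S}_{ta}a)=B(\mathfrak{T}_{ta})^{-1}B(\mathfrak{T}_{ta}a)$ on overlaps as an algebraic consequence of the defining equations (3.1), whereas you phrase this as ``uniqueness of the preimage'' via injectivity of $\widehat{\Phi}$---but that injectivity is itself obtained from the same recovery formula, so the two justifications are equivalent.
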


\begin{proof} To construct the inverse morphism we need to find ways of
recovering a pair $(M,f)$ possessing its image
$(B^{(1)},\ldots,B^{(n)})\in\IHom_{\alpha}(\widehat{J})$. But
$f_{iq}$ comes as $B_{f_{iq}}$ and $M_a =
B(\mathfrak{S}_{ta})^{-1}B(\mathfrak{S}_{ta}a)$ in $U(\mathfrak{S})$
while on intersections $U(\mathfrak{S})\cap U(\mathfrak{T})$ the
equality $B(\mathfrak{S}_{ta})^{-1}B(\mathfrak{S}_{ta}a) =
B(\mathfrak{T}_{ta})^{-1}B(\mathfrak{T}_{ta}a)$ is a direct
consequence of the equations defining $X_0$.

The equivariance of this isomorphism is obvious.
\end{proof}

From now on we will identify each $X(\mathfrak{S})\subseteq\IHom_{\alpha}(\widehat{J})$ with its preimage $\repq(Q,\mathfrak{S}) = \widehat{\Phi}^{-1}(X(\mathfrak{S}))\subseteq\repq^s(q,\alpha,\zeta)$ and treat it as a subset in $\repq^s(Q,\alpha,\zeta)$ whenever needed.

%



\pagebreak

\begin{theorem} In the notation given above
\begin{itemize}
\item[(1)] We have
$$\repq^s(Q,\alpha,\zeta) = \bigcup_{\mbox{$\mathfrak{S}$ is a $J$-skeleton}}X(\mathfrak{S}),$$
where $X(\mathfrak{S})$ are Zariski open subsets of $\repq^s(Q,\alpha,\zeta)$ such that $X(\mathfrak{S})\cong\GL(\alpha)\times\mathbb{A}^N$, for some positive integer $N$, and the restriction to $X(\mathfrak{S})$ of the quotient map is the projection onto the second factor. In particular, $$\mathcal{M}^s(Q,\alpha,\zeta) = \bigcup_{\mbox{$\mathfrak{S}$ is a $J$-skeleton}} X(\mathfrak{S})/\!\!/GL(\alpha)$$
is a covering by open subspaces isomorphic to affine spaces.
\item[(2)] The quotient space $\mathcal{M}^s(Q, \alpha, \zeta)$ is isomorphic to a locally closed subvariety in $\Grass_{\alpha}(\widehat{J})$.
\end{itemize}
\end{theorem}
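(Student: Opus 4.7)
The plan is to leverage the $\GL(\alpha)$-equivariant isomorphism $\widehat{\Phi}\colon\repq^s(Q,\alpha,\zeta)\xrightarrow{\sim} X_0(\alpha)$ from Corollary~3.2 together with Lemma~2.2, reducing both assertions to explicit matrix computations. First, Lemma~2.2 immediately yields $\repq^s(Q,\alpha,\zeta)=\bigcup_{\mathfrak{S}}X(\mathfrak{S})$, the union running over $J$-skeleta of dimension vector $\alpha$; and each $X(\mathfrak{S})=X_0(\alpha)\cap U(\mathfrak{S})$ is Zariski open in $\repq^s(Q,\alpha,\zeta)$ because $U(\mathfrak{S})$ is a principal open subset of $\IHom_\alpha(\widehat{J})$ cut out by the nonvanishing of the determinants $\det B(\mathfrak{S}_i)$.

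For the trivialization I recall that $g\in\GL(\alpha)$ acts on $\IHom_\alpha(\widehat{J})$ by $B^{(i)}\mapsto B^{(i)}g_i^{-1}$, so on $X(\mathfrak{S})$ the block $B(\mathfrak{S}_i)$ is invertible and transforms by right multiplication by $g_i^{-1}$. I then define mutually inverse morphisms
$$X(\mathfrak{S})\longrightarrow\GL(\alpha)\times S(\mathfrak{S}),\quad B\longmapsto\bigl((B(\mathfrak{S}_i))_i,\ (B^{(i)}B(\mathfrak{S}_i)^{-1})_i\bigr),$$
$$\GL(\alpha)\times S(\mathfrak{S})\longrightarrow X(\mathfrak{S}),\quad (g,C)\longmapsto(C^{(i)}g_i)_i,$$
with normalized slice $S(\mathfrak{S}):=\{B\in X(\mathfrak{S})\mid B(\mathfrak{S}_i)=I_{\alpha_i}\text{ for all }i\in Q_0\}$. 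To see $S(\mathfrak{S})\cong\mathbb{A}^N$, I argue that on $S(\mathfrak{S})$ equation~(3.1) becomes $B_{\tau a}=B_\tau B(\mathfrak{S}_{ta}a)$, which inductively expresses every row of $B^{(i)}$ in terms of the length-one rows $B_{f_{iq}}$ and the matrices $M_a=B(\mathfrak{S}_{ta}a)$. The normalization $B(\mathfrak{S}_i)=I_{\alpha_i}$ together with skeleton property~(2) then fixes two groups of data as prescribed standard basis vectors: the $B_{f_{iq}}$ with $f_{iq}\in\mathfrak{S}$, and the rows of $M_a$ indexed by those $\tau\in\mathfrak{S}_{h(a)}$ for which $\tau a\in\mathfrak{S}$. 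The remaining $B_{f_{iq}}$ and $M_a$-rows may be chosen freely, and the rest of $B$ depends linearly on these choices, so $S(\mathfrak{S})$ is isomorphic to the affine space $\mathbb{A}^N$ with $N=\dim\repq(Q,\alpha,\zeta)-\dim\GL(\alpha)$. Since $S(\mathfrak{S})$ is a set-theoretic section of the free $\GL(\alpha)$-action on $X(\mathfrak{S})$, the restriction of the quotient map coincides with the second projection, and the claimed covering of $\mathcal{M}^s(Q,\alpha,\zeta)$ by affine spaces follows.

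For part~(2), Corollary~3.2 realizes $\repq^s(Q,\alpha,\zeta)$ as the locally closed $\GL(\alpha)$-invariant subvariety $X_0(\alpha)\subseteq\IHom_\alpha(\widehat{J})$. Because the canonical morphism $\IHom_\alpha(\widehat{J})\to\Grass_\alpha(\widehat{J})$ is a principal $\GL(\alpha)$-bundle, passing to quotients provides an injective morphism $\mathcal{M}^s(Q,\alpha,\zeta)\to\Grass_\alpha(\widehat{J})$; local closedness of its image is visible chart-by-chart, since each $X(\mathfrak{S})/\!\!/\GL(\alpha)\cong S(\mathfrak{S})$ embeds as the closed subvariety of the standard affine chart $U(\mathfrak{S})/\!\!/\GL(\alpha)\subseteq\Grass_\alpha(\widehat{J})$ cut out by equations~(3.1), and these charts cover the image as $\mathfrak{S}$ varies. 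The main obstacle I anticipate is the combinatorial bookkeeping in the slice description: one has to confirm that on $S(\mathfrak{S})$ no equation from~(3.1) imposes a nontrivial relation among rows that have already been constrained, so that the parametrization by the free data really is an isomorphism onto an affine space. Once this is verified, the rest of the argument is a formal descent along the principal bundle.
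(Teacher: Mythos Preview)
Your approach is correct and essentially identical to the paper's: the paper also right-normalizes by $B(\mathfrak{S}_i)^{-1}$ and identifies the free coordinates on the resulting slice as exactly the rows $B_{f_{iq}}$ with $f_{iq}\notin\mathfrak{S}$ together with the rows of $M_a$ indexed by $\tau\in\mathfrak{S}_{h(a)}$ with $\tau a\notin\mathfrak{S}$, so your $S(\mathfrak{S})$ is precisely the image of the paper's section $C\mapsto B^C$. Your anticipated obstacle dissolves via skeleton property~(2): on $S(\mathfrak{S})$ an equation~(3.1) with $\tau a\in\mathfrak{S}$ has $\tau\in\mathfrak{S}$ as well, whence both sides equal the same standard basis row; the one slip is that the remaining rows of $B$ depend \emph{polynomially}, not linearly, on the free data, but this does not affect the conclusion that $S(\mathfrak{S})\cong\mathbb{A}^N$.
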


\begin{proof} Again it will be convenient for us to view
$\IHom_{\alpha}(\widehat{J})$ as a Zariski open subset in
$\prod_{i\in Q_0}\Mat_{|\widetilde{\Gamma}_i(\alpha)|\times\alpha_i}(\Bbbk)$. Fix a
$J$-skeleton $\mathfrak{S}$ and assume that its elements are somehow
ordered (for example, lexicographically). For a tuple of matrices $B = (B^{(1)},\ldots, B^{(n)})$ define $B_{[\mathfrak{S}]}$ as the $n$-tuple of matrices with $B_{[\mathfrak{S}]}^{(i)}$
a submatrix of $B^{(i)}$ consisting of all the rows of all $B(\mathfrak{S}a)$, for $a\in Q_1$ with $t(a) = i$,
that do not occur in $B(\mathfrak{S}_i)$. Also denote by
$B_{\widehat{\mathfrak{S}}}$ a tuple of matrices with $B_{\widehat{\mathfrak{S}}}^{(i)}$ obtained as a
union of $B_{[\mathfrak{S}]}^{(i)}$ with all $B_{f_{ik_i}}$, for $f_{ik_i}\notin\mathfrak{S}$. Denote by $T_i$ the number of rows in $B_{[\mathfrak{S}]}^{(i)}$.

Consider the morphism $\pi_{\mathfrak{S}}: \prod_{i\in Q_0}\Mat_{|\widetilde{\Gamma}_i(\alpha)|\times\alpha_i}(\Bbbk)\rightarrow
\prod_{i\in Q_0}\Mat_{T_i\times\alpha_i}(\Bbbk)$ defined by $B^{(i)}\mapsto \left(B^{(i)}\cdot
B(\mathfrak{S}_i)^{-1}\right)_{\widehat{\mathfrak{S}}}$. We claim that
it provides the quotient morphism for the action of $GL(\alpha)$ on~$U(\mathfrak{S})$.

First of all, $\pi_{\mathfrak{S}}$ is $GL(\alpha)$-invariant, since, for
$g\in GL(\alpha)$, we have $\pi_{\mathfrak{S}}(g\cdot B)^{(i)} =
\pi_{\mathfrak{S}}(Bg^{-1})^{(i)} = \left(B^{(i)}g_i^{-1}\cdot
g_iB(\mathfrak{S}_i)^{-1}\right)_{\widehat{\mathfrak{S}}} =
\pi_{\mathfrak{S}}(B_i)$. We now prove that $\pi_{\mathfrak{S}}$ is
surjective. Let $C\in\prod_{i\in Q_0}\Mat_{T_i\times\alpha_i}(\Bbbk)$. Recall that each row of
each $C^{(i)}$ corresponds to a path from
$\bigcup_{a\in Q_1}\left(\mathfrak{S}a\backslash\mathfrak{S}\right)\cup\left\{f_i\notin\mathfrak{S}\right\}$. Take
an identity $\alpha_i\times\alpha_i$-matrix $E^{(i)}$ and put its $j$-th row into
correspondence with the $j$-th path from $\mathfrak{S}_i$ (with
respect to the order we introduced at the beginning of the proof).
Now, add to $C^{(i)}$ all the rows of $E^{(i)}$ corresponding to paths from
$\mathfrak{S}\cap(\bigcup_{a\in Q_1,\,ta = i}\mathfrak{S}a)$ and denote the matrix received by
$\widetilde{C}^{(i)}$. The first step will be now to recover a stable pair $(M^C,f^C)$, then we will use it to obtain a matrix in
$\pi_{\mathfrak{S}}^{-1}(C)$. Put $M^C_a = \widetilde{C}(\mathfrak{S}a)$, for all $a\in Q_1$,
and
$$f^C_i = \begin{cases} E_{f_i}^{(i)},\mbox{\ if $f_i\in\mathfrak{S}$},\\
C_{f_i}^{(i)},\mbox{\ otherwise}.
\end{cases}$$
Finally, set $B^C = \Phi_{(M^C,f^C)}$. One should show now that
$\pi_{\mathfrak{S}}(B^C) = C$. But $B(\mathfrak{S}_i) = E^{(i)}$, so
$B(\mathfrak{S}a) = B(\mathfrak{S})a = M_a =
\widetilde{C}(\mathfrak{S}a)$, for all $a\in Q_1$, implying that
$\pi_{\mathfrak{S}}(B)_{[\mathfrak{S}]} =
B_{[\mathfrak{S}]} = C_{[\mathfrak{S}]}$, for $a\in Q_1$.
Analogously, $B_{f_i}^{(i)} = C_{f_i}^{(i)}$, for all $f_i\notin\mathfrak{S}$.
Thus, $\pi_{\mathfrak{S}}(B) = C$ and the surjectivity is proven.

Now we should show that fibers of $\pi_{\mathfrak{S}}$ coincide with
$GL(\alpha)$-orbits in $U(\mathfrak{S})$. Observe, that for the above
constructed $B^C\in\pi_{\mathfrak{S}}^{-1}(C)$ we have $\left((B^C)^{(i)}\cdot
B^C(\mathfrak{S}_i)^{-1}\right)(\mathfrak{S}) = E^{(i)}$, an identity matrix.
But it is easy to see that a $GL(\alpha)$-orbit in $U(\mathfrak{S})$
contains only one tuple of matrices with this property. So, any $B\in\pi_{\mathfrak{S}}^{-1}(C)$ equals $g(B)\cdot B^C$, where $g(B)_i = B(\mathfrak{S}_i)$. 

An isomorphism $\GL(\alpha)\times\prod_{i\in Q_0}\Mat_{T_i\times\alpha_i}(\Bbbk)\rightarrow X(\mathfrak{S})$ is therefore obtained by sending a pair $\left(g,C\right)$ to $g\cdot B^C = \left((B^C)^{(i)}g^{-1}_i\right)_{i\in Q_0}$.


To prove the second part it is sufficient to check that each $X(\mathfrak{S})/\!\!/GL(\alpha)$ embeds into $\Grass(\mathfrak{S}) := U(\mathfrak{S})/\!\!/GL(\alpha)$ as a locally closed subvariety. Observe that the natural projection $\pi_0:U(\mathfrak{S})\twoheadrightarrow \Grass(\mathfrak{S})$ may be viewed as the map $\prod_{i\in Q_0}\Mat_{|\widetilde{\Gamma}_i(\alpha)|\times\alpha_i}(\Bbbk)\rightarrow\prod_{i\in Q_0}\Mat_{(|\widetilde{\Gamma}_i(\alpha)| - \alpha_i)\times\alpha_i}(\Bbbk)$, $B\mapsto\left(B^{(i)}\cdot
B(\mathfrak{S}_i)^{-1}\right)_{\Gamma(\alpha)\backslash\mathfrak{S}}$. Hence $X(\mathfrak{S})/\!\!/GL(\alpha)$ is isomorphic to a subvariety in $\Grass(\mathfrak{S})$ defined by equations
$$C_{\tau{a}} = C_{\tau}C(\mathfrak{S}_{ta}a),$$
for all paths $\tau$ and arrows $a$ with $\tau{a}\notin(\bigcup_{a\in Q_1,\,ta = i}\mathfrak{S}a)$ .
\end{proof}

\begin{corollary} In the settings of Theorem 1 we have the following
\begin{itemize}
\item[(1)] Each stable pair $(M,f)$ possesses a finite family of normal forms, each normal form corresponding to a $J$-skeleton of $(M,f)$. If $\mathfrak{S}$ is a $J$-skeleton of $(M,f)$, then the respective normal form of $(M,f)$ is $\left(M^C,f^C\right)$, where $C = \pi_{\mathfrak{S}}(M,f)$.
\item[(2)] The following procedure may be used to determine whether two stable pair $(M,f)$ and $(M',f')$ are isomorphic.
    \begin{itemize}
\item[(a)] Compute tuples of matrices $B$ and $B'$ corresponding to $(M,f)$ and $(M',f')$.
\item[(b)] Find their $J$-skeleta by seeking non-degenerate maximal minors of $B$ and $B'$.
\item[(c)] If they have no common $J$-skeleta, the pairs are not isomorphic.
\item[(d)] If $\mathfrak{S}$ a common $J$-skeleton, compute $\left(M^{C},f^{C}\right)$ and $\left(M^{C'},f^{C'}\right)$ for $C = \pi_{\mathfrak{S}}(B)$ and $C' = \pi_{\mathfrak{S}}(B')$.
\item[(e)] The pairs $(M,f)$ and $(M',f')$ are isomorphic if and only if $\left(M^{C},f^{C}\right) = \left(M^{C'},f^{C'}\right)$.
\end{itemize}
\end{itemize}
\end{corollary}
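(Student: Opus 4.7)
The plan is to reduce both parts of the corollary to Theorem 3.3, treating the statement as a concrete repackaging of that theorem in the language of normal forms and a decision procedure. For part~(1), I would first invoke Lemma 2.2 together with the observation made after Lemma 2.2 that any path occurring in a $J$-skeleton of dimension vector $\alpha$ has length at most $\max_i\alpha_i$; this yields a finite nonempty family of $J$-skeleta of any given stable pair $(M,f)$. Fixing one such $\mathfrak{S}$, the inclusion $(M,f)\in X(\mathfrak{S})$ is immediate from the definitions. The trivialization of $X(\mathfrak{S})$ built in the proof of Theorem 3.3 shows that each $\GL(\alpha)$-orbit in $X(\mathfrak{S})$ meets the section $C\mapsto B^C$ in exactly one point, namely the unique tuple with $B(\mathfrak{S}_i)=E^{(i)}$ for every $i$, and that this distinguished tuple corresponds to $C=\pi_{\mathfrak{S}}(M,f)$. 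Translating back through the bijection between $X(\mathfrak{S})$ and $\repq(Q,\mathfrak{S})$ produces exactly the pair $(M^C,f^C)$ as the canonical representative in the orbit of $(M,f)$ associated with $\mathfrak{S}$.

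For part~(2), the one piece of content not already visible in Theorem 3.3 is the $\GL(\alpha)$-invariance of the family of $J$-skeleta attached to a pair, which underlies step (c). I would verify this as follows: if $(M',f')=g\cdot(M,f)$, then $g$ induces an isomorphism of the associated $Q^{\zeta}$-representations, hence a canonical identification $\widetilde{\Phi}_{\widetilde{M}}(\widetilde{M})=\widetilde{\Phi}_{\widetilde{M}'}(\widetilde{M}')$ as subrepresentations of $\widetilde{J}$. Conditions (1) and (2) in the definition of $J$-skeleton only refer to restrictions of paths to this common subrepresentation, so the sets of $J$-skeleta of $(M,f)$ and $(M',f')$ coincide. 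In particular, if the non-degenerate maximal minors of $B$ and $B'$ computed in steps (a) and (b) yield disjoint families of skeleta, the two pairs cannot be isomorphic.

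Conversely, if $(M,f)$ and $(M',f')$ share a $J$-skeleton $\mathfrak{S}$, both pairs lie in $X(\mathfrak{S})$, and Theorem 3.3 identifies $\pi_{\mathfrak{S}}|_{X(\mathfrak{S})}$ with the geometric quotient map. Therefore they are $\GL(\alpha)$-equivalent if and only if $\pi_{\mathfrak{S}}(B)=\pi_{\mathfrak{S}}(B')$, and by part~(1) this is equivalent to the coincidence of the normal forms $(M^{C},f^{C})$ and $(M^{C'},f^{C'})$, establishing steps (d) and (e). The main obstacle, modest as it is, is exactly this $\GL(\alpha)$-invariance of the collection of skeleta; everything else is a direct reading of Theorem 3.3, with care needed only to match the matrix-level description (tuples $B$ and their minors) with the representation-theoretic one (pairs $(M,f)$ and their canonical representatives).
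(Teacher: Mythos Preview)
Your proposal is correct. The paper gives no separate proof of this corollary, treating it as an immediate consequence of Theorem~3.3, and your argument is exactly the intended elaboration: the finiteness of $J$-skeleta (Lemma~2.2 and the length bound) gives part~(1), while the identification of $\pi_{\mathfrak{S}}$ with the geometric quotient on $X(\mathfrak{S})$ gives the equivalence in steps~(d)--(e). Your explicit verification that the collection of $J$-skeleta is $\GL(\alpha)$-invariant (needed for step~(c)) is the one point the paper leaves implicit; it is indeed clear either from your subrepresentation argument or, equivalently, from the fact that $B' = B\cdot g^{-1}$ so that the nonvanishing of a given maximal minor of $B$ and of $B'$ are equivalent.
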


\begin{remark} {\rm Dimensions of the affine spaces covering the quotient space equal the dimension of the quotient itself, i.e., the difference $\dim\repq(Q,\alpha,\zeta) - \dim\GL(\alpha)$. For $Q = L_{q,k}$ we have $\dim{X(\mathfrak{S})/\!\!/\GL_m} = \dim{\repq(L_q,m,k)} - \dim\GL_m = (m^2q + mk) - m^2 = m(mq + k - m)$, so that $X(\mathfrak{S})/\!\!/GL_m\cong\mathbb{A}^{m(mq + k - m)}$.}
\end{remark}

We have described affine charts covering the quotient. Since all of them are affine spaces, on each we obtain a convenient system of local coordinates. The transition functions between these coordinates may be described in the following way. Let $\mathfrak{S}$ and $\mathfrak{T}$ be two $J$-skeleta, and $B\in\prod_{i\in Q_0}\Mat_{|\widetilde{\Gamma}_i(\alpha)|\times\alpha_i}(\Bbbk)$ be a matrix representing a point in $\repq^s(Q,\alpha,\zeta)$. Then we may express $B$ in matrix elements of $(B^{(i)}\cdot B(\mathfrak{S}_i)^{-1})_{\widehat{\mathfrak{S}}}$, for $i\in Q_0$, and further obtain the expression for $(B^{(j)}\cdot B(\mathfrak{T}_j)^{-1})_{\widehat{\mathfrak{T}}}$.

The same procedure may be used to establish relations between normal forms of a stable pair constructed using different $J$-skeleta.

%
%


\section{Examples}

Let $Q$ be the quiver $L_q$ and $\alpha = (m)$. It is easy to see that every $J$-skeleton that may occur in this case is a subset of $\left\{f_iW(a_1,\ldots,a_q)\mid\mbox{$W$ is a word in $a_j$ of length at most $m-1$}\right\}$, so we set $$\widetilde{\Gamma}(m) = \left\{f_iW(a_1,\ldots,a_q)\mid\mbox{$W$ is a word in $a_j$ of length at most $m$}\right\},$$ $\widehat{J} = \bigoplus_{i,W, length(W) \leqslant m}\Bbbk u^{i,W}$ and $\varphi_{i,W}(m) = f_iW(a_1,\ldots,a_q)m$.


\begin{example} {\rm Let $q = k = 1$. This encodes the problem of classifying pairs $(a,f)$, where $a$ is a linear operator on $\Bbbk^m$ and $f$ is a linear function. The extended quiver $L_1^{(1)} = L_{1,1}$ is $$\xymatrix{1\ar@(ul,dl)[]_a\ar[r]^f & {\infty}}$$

There is only one $J$-skeleton, namely $\mathfrak{S} = \{f, fa,\ldots,fa^{m-1}\}$. Therefore, $\widetilde{\Gamma}(m) = \{f, fa,\ldots,fa^{m}\}$ and $\widehat{J} = \bigoplus_{i = 0}^mV_1^{(fa^i)}$, where $V_1 = \Bbbk$, so that $\widehat{J} = \Bbbk^{m+1}$ as a vector space. Furthermore, we have
$$\widehat{\Phi}:\repq(L_1,m,1)\rightarrow\Mat_{(m+1)\times m}(\Bbbk),\quad (a,f)\mapsto\widehat{\Phi}_{(a,f)}=\begin{pmatrix}f\\fa\\ \vdots\\ fa^m\end{pmatrix}.$$
One can easily see that the stability condition (that is injectivity of $\widehat{\Phi}_{(a,f)}$) is in this case equivalent to $f$ being a cyclic vector for the natural action $GL_m:(\Bbbk^m)^*$.
The subset $X(\mathfrak{S})\subseteq\Mat_{(m+1)\times m}(\Bbbk)$ coincides with $U(\mathfrak{S})$, i.e., consists of matrices $B = (b_{ij})$ with first $m$ rows linear independent. Indeed, the only condition $B_{fa^m} = B_{fa^{m-1}}B(\mathfrak{S})^{-1}B(\mathfrak{S}a)$ holds identically. Next, for a $(m+1)\times m$-matrix $A$ we have $A_{\widehat{\mathfrak{S}}} = A_{fa^{m}}$, hence the quotient map $\pi_{\mathfrak{S}}$ sends
$$\pi_{\mathfrak{S}}: B = (b_{ij})\mapsto \left(b_{m+1,1},\ldots,b_{m+1,m}\right)\begin{pmatrix}
b_{11} & \ldots & b_{1m}\\
\vdots & & \vdots\\
b_{m1} & \ldots & b_{mm}\end{pmatrix}^{-1}.$$

The quotient is isomorphic to $\mathbb{A}^m$ and $\pi_{\mathfrak{S}}$ admits a section $\mathbb{A}^m\rightarrow\repq(L_1,m,1)$ sending $C = (c_0,\ldots,c_{m-1})$ to the pair
$$f^C = (1,0,\ldots,0),\qquad a^C = \begin{pmatrix}
0 & 1 & 0 & \ldots & 0\\
\vdots & \ddots & \ddots & \ddots & \vdots\\
0&  & 0 & 1 & 0\\
0 & \hdotsfor{2} & 0 & 1\\
c_0 & c_1 & \ldots & c_{m-2} & c_{m-1}
\end{pmatrix},$$
where $c_i$ are the coefficients of the characteristic polynomial of $a$.}
\end{example}

\begin{example} {\rm Let $q = 1$, $m = k = 2$. The corresponding extended quiver $L_1^{(2)} = L_{1,2}$ is
$$\xymatrix{1\ar@(ul,dl)[]_a\ar@/^0.5pc/[r]^{f_1}\ar@/_0.5pc/[r]_{f_2} & {\infty}}$$
There are three possible $J$-skeleta: $\mathfrak{S}_1 = \{f_1,f_1a\}$, $\mathfrak{S}_2 = \{f_2, f_2a\}$, $\mathfrak{S}_3 = \{f_1,f_2\}$. Hence, $\widetilde{\Gamma}(m) = \left\{f_1,f_2,f_1a,f_2a,f_1a^2,f_2a^2\right\}$, so that
$$\widehat{\Phi}:\repq(L_1,2,2)\rightarrow\Mat_{6\times2}(\Bbbk),\quad (a,f_1,f_2)\mapsto\widehat{\Phi}_{(a,f_1,f_2)}=\begin{pmatrix}f_1\\f_2\\f_1a\\f_2a\\f_1a^2\\f_2a^2\end{pmatrix}.$$
In particular, $\widehat{J} = \Bbbk^6$.

First of all we describe the subset $\TrueIm{\widehat{\Phi}} = X(\mathfrak{S}_1)\cup X(\mathfrak{S}_2)\cup X(\mathfrak{S}_3)\subseteq\Mat_{6\times2}(\Bbbk)$. The chart $X(\mathfrak{S}_1)$ consists of $6\times2$-matrices $B = (b_{ij})$ with $|B(\mathfrak{S}_1)| = \left|\begin{smallmatrix}b_{11} & b_{12}\\ b_{31} & b_{32}\end{smallmatrix}\right|\ne 0$ satisfying conditions $B_{f_2a} = B_{f_2}B(\mathfrak{S}_1)^{-1}B(\mathfrak{S}_1a)$ and $B_{f_2a^2} = B_{f_2a}B(\mathfrak{S}_1)^{-1}B(\mathfrak{S}_1a)$, that may be expanded as
$$(b_{41}, b_{42}) = (b_{21}, b_{22})\begin{pmatrix}b_{11} & b_{12}\\ b_{31} & b_{32}\end{pmatrix}^{-1}\begin{pmatrix}b_{31} & b_{32}\\ b_{51} & b_{52}\end{pmatrix}$$
and $$(b_{61}, b_{62}) = (b_{41}, b_{42})\begin{pmatrix}b_{11} & b_{12}\\ b_{31} & b_{32}\end{pmatrix}^{-1}\begin{pmatrix}b_{31} & b_{32}\\ b_{51} & b_{52}\end{pmatrix}$$
respectively. Similarly we find that $X(\mathfrak{S}_2)\subseteq\Mat_{{6\times2}(\Bbbk)}$ consists of matrices $B = (b_{ij})$ with $|B(\mathfrak{S}_2)| = \left|\begin{smallmatrix}b_{21} & b_{22}\\ b_{41} & b_{42}\end{smallmatrix}\right|\ne 0$ and
$$\begin{pmatrix}b_{31} & b_{32}\\ b_{51} & b_{52}\end{pmatrix} = \begin{pmatrix}b_{11} & b_{12}\\b_{31} & b_{32}\end{pmatrix}\begin{pmatrix}b_{21} & b_{22}\\ b_{41} & b_{42}\end{pmatrix}^{-1}\begin{pmatrix}b_{41} & b_{42}\\ b_{61} & b_{62}\end{pmatrix}.$$
Finally, $X(\mathfrak{S}_3)\subseteq\Mat_{6\times2}(\Bbbk)$ is a subset consisting of matrices $B = (b_{ij})$ satisfying $B(\mathfrak{S}_3) = \left|\begin{smallmatrix}b_{11} & b_{12}\\ b_{21} & b_{22}\end{smallmatrix}\right|\ne 0$ and
$$\begin{pmatrix}b_{51} & b_{52}\\ b_{61} & b_{62}\end{pmatrix} = \begin{pmatrix}b_{31} & b_{32}\\b_{41} & b_{42}\end{pmatrix}\begin{pmatrix}b_{11} & b_{12}\\ b_{21} & b_{22}\end{pmatrix}^{-1}\begin{pmatrix}b_{31} & b_{32}\\ b_{41} & b_{42}\end{pmatrix}.$$
Next, we write down the quotient maps $\pi_{\mathfrak{S}_i}:\repq(L_1,\mathfrak{S}_i)\cong X(\mathfrak{S}_i)\rightarrow\Mat_{2\times2}(\Bbbk)$;
$$\pi_{\mathfrak{S}_1}: (a,f_1,f_2)\mapsto \begin{pmatrix}
f_2\\
f_1a^2
\end{pmatrix}\begin{pmatrix}
f_1\\
f_1a
\end{pmatrix}^{-1},\ B = (b_{ij})\mapsto\begin{pmatrix}
b_{31} & b_{32}\\
b_{51} & b_{52}
\end{pmatrix}\begin{pmatrix}
b_{11} & b_{12}\\
b_{31} & b_{32}
\end{pmatrix}^{-1};$$
$$\pi_{\mathfrak{S}_2}: (a,f_1,f_2)\mapsto \begin{pmatrix}
f_1\\
f_2a^2
\end{pmatrix}\begin{pmatrix}
f_2\\
f_2a
\end{pmatrix}^{-1},\ B = (b_{ij})\mapsto\begin{pmatrix}
b_{41} & b_{42}\\
b_{61} & b_{62}
\end{pmatrix}\begin{pmatrix}
b_{21} & b_{22}\\
b_{41} & b_{42}
\end{pmatrix}^{-1};$$
$$\pi_{\mathfrak{S}_3}: (a,f_1,f_2)\mapsto \begin{pmatrix}
f_1a\\
f_2a
\end{pmatrix}\begin{pmatrix}
f_1\\
f_2
\end{pmatrix}^{-1},\ B = (b_{ij})\mapsto\begin{pmatrix}
b_{31} & b_{32}\\
b_{41} & b_{42}
\end{pmatrix}\begin{pmatrix}
b_{11} & b_{12}\\
b_{21} & b_{22}
\end{pmatrix}^{-1}.$$
Each $\pi_{\mathfrak{S}_i}$ admits a section $\Mat_{2\times2}(\Bbbk)\rightarrow\repq(L_1,\mathfrak{S}_i)$ sending $C = (x^{(i)}_{ab})$ to the triple $(a^C,f_1^C,f_2^C)$ with
$$
a^C = \begin{pmatrix}
0 & 1\\
x^{(1)}_{21} & x^{(1)}_{22}
\end{pmatrix},\
f^C_1 =(1, 0),\ f_2^C = (x^{(1)}_{11}, x^{(1)}_{12}),\mbox{ for $i = 1$}\eqno{(4.1)}$$
$$
a^C = \begin{pmatrix}
0 & 1\\
x^{(2)}_{21} & x^{(2)}_{22}
\end{pmatrix},\
f^C_1 =(x^{(2)}_{11}, x^{(2)}_{12}),\ f_2^C = (1,0),\mbox{ for $i = 2$}\eqno{(4.2)}$$
$$a^C = C = \begin{pmatrix}
x^{(1)}_{11} & x^{(1)}_{12}\\
x^{(1)}_{21} & x^{(1)}_{22}
\end{pmatrix},\ f^C_1 = (1, 0),\ f^C_2 = (0,1),\mbox{ for $i = 3$}.\eqno{(4.3)}$$

Now, if a triple $T:=(a,f_1,f_2)$ belongs to $\repq(Q,\mathfrak{S}_i)\cap\repq(Q,\mathfrak{S}_j)$, for some $i\ne j$, we obtain two normal forms $(a^{\pi_{\mathfrak{S}_i}(T)}, f_1^{\pi_{\mathfrak{S}_i}(T)}, f_2^{\pi_{\mathfrak{S}_i}(T)})$ and $(a^{\pi_{\mathfrak{S}_j}(T)}, f_1^{\pi_{\mathfrak{S}_j}(T)}, f_2^{\pi_{\mathfrak{S}_j}(T)})$ corresponding to $2\times2$-matrices $\pi_{\mathfrak{S}_i}(T) = (x_{ab}^{(i)})$ and $\pi_{\mathfrak{S}_j}(T) = (x_{ab}^{(j)})$. One easily computes that
$$\begin{pmatrix}
x^{(1)}_{11} & x^{(1)}_{12}\\
x^{(1)}_{21} & x^{(1)}_{22}
\end{pmatrix}  =
\begin{pmatrix}
f_2\\
f_1a^2
\end{pmatrix}\begin{pmatrix}
f_1\\
f_1a
\end{pmatrix}^{-1} =
\begin{pmatrix}
-\frac{\displaystyle x^{(3)}_{11}}{\displaystyle x^{(3)}_{12}} &
\frac{\displaystyle 1}{\displaystyle x^{(3)}_{12}}\\
x^{(3)}_{12}x^{(3)}_{21} - x^{(3)}_{11}x^{(3)}_{22} & x^{(3)}_{11} + x^{(3)}_{22}
\end{pmatrix},$$
$$\begin{pmatrix}
x^{(3)}_{11} & x^{(3)}_{12}\\
x^{(3)}_{21} & x^{(3)}_{22}
\end{pmatrix} = \begin{pmatrix}
-\frac{\displaystyle x^{(1)}_{11}}{\displaystyle x^{(1)}_{12}} &
\frac{\displaystyle 1}{\displaystyle x^{(1)}_{12}}\\
\frac{\displaystyle (x^{(1)}_{12})^2x^{(1)}_{21} - (x^{(1)}_{11})^2 - x^{(1)}_{11}x^{(1)}_{12}x^{(1)}_{22}}{\displaystyle x^{(1)}_{12}} & \frac{\displaystyle x^{(1)}_{11} + x^{(1)}_{12}x^{(1)}_{22}}{\displaystyle x^{(1)}_{12}}
\end{pmatrix},$$
$$\begin{pmatrix}
x^{(2)}_{11} & x^{(2)}_{12}\\
x^{(2)}_{21} & x^{(2)}_{22}
\end{pmatrix} = \begin{pmatrix}
-\frac{\displaystyle x^{(1)}_{11} + x^{(1)}_{12}x^{(1)}_{22}}{\displaystyle (x^{(1)}_{12})^2x^{(1)}_{21} - (x^{(1)}_{11})^2 - x^{(1)}_{11}x^{(1)}_{12}x^{(1)}_{22}} & \frac{\displaystyle x^{(1)}_{12}}{\displaystyle (x^{(1)}_{12})^2x^{(1)}_{21} - (x^{(1)}_{11})^2 - x^{(1)}_{11}x^{(1)}_{12}x^{(1)}_{22}}\\
x^{(1)}_{21} & x^{(1)}_{22}
\end{pmatrix}.$$

\bigskip

We return for a while to the case of arbitrary $m$, $q$ and $k$. Since the quotient is embedded into $\Grass_{m}(\widehat{J})$, we need to introduce a connection between local coordinates on $Y(\mathfrak{S}) := X(\mathfrak{S})/\!\!/GL(\alpha)$ and Pl\"ucker coordinates on $\Grass_m(\widehat{J})$. Observe that the latter are of the form $p_{\mathfrak{R}}$, for all subsets $\mathfrak{R}\subseteq\widetilde{\Gamma}(m)$ of cardinality $m$. Indeed, the natural projection $\Mat_{k(m+1)^q\times m}(\Bbbk)\supseteq\IHom_m(\widehat{J})\twoheadrightarrow\Grass_m(\widehat{J})$ maps a matrix $B$ to a point $\omega_B$, whose Pl\"ucker coordinates are $m\times m$-minors of $B$. So, we denote by $p_{\mathfrak{R}}$ the coordinate with corresponding minor consisting of rows prescribed by $\mathfrak{R}$. As for the local coordinates on $Y(\mathfrak{S})$, their definition together with Cramer's rule imply that they are indexed by $m$-element subsets $\mathfrak{R}$ in $\widetilde{\Gamma}(m)$ that may be obtained from $\mathfrak{S}$ by replacement of one of its elements by an element of $(\mathfrak{S}a\backslash\mathfrak{S})\cup\{f_i\notin\mathfrak{S}\}$. 

In Example 4.2, we have $\Grass_{m}(\widehat{J}) = \Grass_2(\Bbbk^6)$. As it was suggested before, we index Pl\"ucker coordinates by pairs of paths. For instance, $p_{f_2,f_{1}a^2}$ stands for $p_{25}$. For a point $\omega_{B}$ corresponding to a rank $m$ matrix $B\in\Mat_{6\times2}(\Bbbk)$ it comes as $\left|\begin{smallmatrix}b_{21} & b_{22}\\b_{51} & b_{52}\end{smallmatrix}\right|$, and this equals~$\left|\begin{smallmatrix}f_2\\f_1a^2\end{smallmatrix}\right|$ if $(b_{ij}) = \widehat{\Phi}((a,f_1,f_2))$. We obtain the expressions
$$\begin{pmatrix}
x^{(1)}_{11} & x^{(1)}_{12}\\
x^{(1)}_{21} & x^{(1)}_{22}
\end{pmatrix} = \begin{pmatrix}
\frac{\displaystyle p_{f_2,f_1a}}{\displaystyle p_{f_1,f_1a}} & -\frac{\displaystyle p_{f_1,f_2}}{\displaystyle p_{f_1,f_1a}}\\
-\frac{\displaystyle p_{f_1a,f_1a^2}}{\displaystyle p_{f_1,f_1a}} & \frac{\displaystyle p_{f_1,f_1a^2}}{\displaystyle p_{f_1,f_1a}}
\end{pmatrix},\quad \begin{pmatrix}
x^{(2)}_{11} & x^{(2)}_{12}\\
x^{(2)}_{21} & x^{(2)}_{22}
\end{pmatrix} = \begin{pmatrix}
\frac{\displaystyle p_{f_1,f_2a}}{\displaystyle p_{f_2,f_2a}} & -\frac{\displaystyle p_{f_1,f_2}}{\displaystyle p_{f_2,f_2a}}\\
-\frac{\displaystyle p_{f_2a,f_2a^2}}{\displaystyle p_{f_2,f_2a}} & \frac{\displaystyle p_{f_2,f_2a^2}}{\displaystyle p_{f_2,f_2a}}
\end{pmatrix},$$
$$\begin{pmatrix}
x^{(3)}_{11} & x^{(3)}_{12}\\
x^{(3)}_{21} & x^{(3)}_{22}
\end{pmatrix} = \begin{pmatrix}
-\frac{\displaystyle p_{f_2,f_1a}}{\displaystyle p_{f_1,f_2}} & \frac{\displaystyle p_{f_1,f_1a}}{\displaystyle p_{f_1,f_2}}\\
-\frac{\displaystyle p_{f_2,f_2a}}{\displaystyle p_{f_1,f_2}} & \frac{\displaystyle p_{f_1,f_2a}}{\displaystyle p_{f_1,f_2}}
\end{pmatrix}.$$
Now, we determine the equations in Pl\"ucker coordinates that define the closure of the quotient in $\Grass_2(\widehat{J})$ in Example 4.2. First of all, there are Pl\"ucker relations. We also have the following equations coming from transition relations between $x^{(i)}_{ab}$ and $x^{(j)}_{cd}$
$$p_{f_1a,f_1a^2}p_{f_2,f_2a} = p_{f_2a,f_2a^2}p_{f_1,f_1a},\quad p_{f_1,f_1a^2}p_{f_2,f_2a} = p_{f_2,f_2a^2}p_{f_1,f_1a},$$
$$p_{f_1a,f_1a^2}p_{f_1,f_2}^2 = p_{f_1,f_1a}(p_{f_1,f_1a}p_{f_2,f_2a} - p_{f_2,f_1a}p_{f_1,f_2a}),$$
$$p_{f_2a,f_2a^2}p_{f_1,f_2}^2 = p_{f_2,f_2a}(p_{f_1,f_1a}p_{f_2,f_2a} - p_{f_2,f_1a}p_{f_1,f_2a}),$$
$$p_{f_1,f_1a^2}p_{f_1,f_2} = p_{f_1,f_1a}(p_{f_1,f_2a} - p_{f_2,f_1a}),$$
$$p_{f_2,f_2a^2}p_{f_1,f_2} = p_{f_2,f_2a}(p_{f_1,f_2a} - p_{f_2,f_1a}).$$
As we see, not all $p_{\mathfrak{R}}$, $\mathfrak{R}\subseteq\widetilde{\Gamma}(m)$, occur as numerators of local coordinates on the quotient. Those that do not occur we will call {\it exceed} and others {\it essential}. We claim that exceed homogeneous coordinates may be eliminated, i.e., that we are able to express them as polynomials in local coordinates in each affine chart. Indeed, we may express $f_i$ and $a_j$, and then compute all maximal minors of the matrix of $\Phi_{(a,f)}$. For instance, we have
$$\frac{p_{f_1a,f_2a}}{p_{f_1,f_1a}} = \frac{p_{f_1,f_2}p_{f_1a,f_1a^2}}{p_{f_1,f_1a^2}^2},
\qquad\frac{p_{f_1a,f_2a}}{p_{f_2,f_2a}} = \frac{p_{f_1,f_2}p_{f_2a,f_2a^2}}{p_{f_2,f_2a^2}^2},$$
$$\frac{p_{f_1a,f_2a}}{p_{f_1,f_2}} = \frac{p_{f_1,f_1a}p_{f_2,f_2a} - p_{f_1,f_2a}p_{f_1a,f_2}}{p_{f_1,f_2}^2},$$
$$\frac{p_{f_1,f_2a^2}}{p_{f_1,f_1a}} = \frac{p_{f_1,f_2}p_{f_1a,f_1a^2}p_{f_1,f_1a} + p_{f_2,f_1a}p_{f_1,f_1a^2}p_{f_1,f_1a} - p_{f_1,f_2}p_{f_1,f_1a^2}^2}{p_{f_1,f_1a}^3},$$
$$\frac{p_{f_1,f_2a^2}}{p_{f_1,f_2}} = \frac{p_{f_1,f_2a}^2 - p_{f_1,f_1a}p_{f_2,f_2a}}{p_{f_1,f_2}^2}.$$

Returning back to the general case, we may consider an obvious projection-like map $\mathcal{M}^s(L_q,m,k)\rightarrow\mathbb{P}^N$, where $N+1$ is the number of non-exceed coordinates. Clearly it is well defined. Denote by $\widehat{Y}_0$ the image of the quotient in $\mathbb{P}^N$. We claim now that the closure $\widehat{Y}$ of $\widehat{Y}_0$ (or at least something containing $\widehat{Y}$ as an irreducible component) is defined in $\mathbb{P}^N$ solely by the equations that come from transition relations on the quotient, i.e., that Pl\"ucker equations are no more required. The reason is that on each affine chart $Y(\mathfrak{S})$, where $\mathfrak{S}$ is a $J$-skeleton, we can express all non-exceed coordinates $p_{\mathfrak{R}}$ in local coordinates of $Y(\mathfrak{S})$ using only this kind of equations, and since $Y(\mathfrak{S})$ is an affine space, these local coordinates are algebraically independent. So, no additional equations are needed.

In the case $q = 1$, $m = k = 2$ we have $15$ Pl\"ucker coordinates, only $9$ of them are essential and other $6$ are exceed. So, the quotient may be embedded as a locally closed subset into $\mathbb{P}^8$.
}
\end{example}

\begin{example} {\rm Let $q = m = 2$, $k = 1$. Denote the loops by $a$ and $b$. There are then two possible $J$-skeleta: $\mathfrak{S}_1 = \{f,fa\}$ and $\mathfrak{S}_2 = \{f,fb\}$. Hence $\widetilde{\Gamma}(m) = \left\{f,fa,fa^2,fb,fab,fba,fb^2\right\}$ (we fix this order of paths and construct the map $\widehat{\Phi}$ according to it) and $\widehat{J} = \Bbbk^7$.

It is not hard to see that
$$X(\mathfrak{S}_1) = \left\{B\in\Mat_{7\times2}(\Bbbk)\left|\begin{matrix}\begin{vmatrix}b_{11} & b_{12}\\b_{21} & b_{22}\end{vmatrix}\ne 0,\\
\begin{pmatrix}b_{61} & b_{62}\end{pmatrix} = \begin{pmatrix}b_{31} & b_{32}\end{pmatrix}\begin{pmatrix}b_{11} & b_{12}\\ b_{21} & b_{22}\end{pmatrix}^{-1}\begin{pmatrix}b_{21} & b_{22}\\ b_{41} & b_{42}\end{pmatrix}\\
\begin{pmatrix}b_{71} & b_{72}\end{pmatrix} = \begin{pmatrix}b_{31} & b_{32}\end{pmatrix}\begin{pmatrix}b_{11} & b_{12}\\ b_{21} & b_{22}\end{pmatrix}^{-1}\begin{pmatrix}b_{31} & b_{32}\\ b_{51} & b_{52}\end{pmatrix}\end{matrix}\right.\right\}$$
and
$$X(\mathfrak{S}_2) = \left\{B\in\Mat_{7\times2}(\Bbbk)\left|\begin{matrix}\begin{vmatrix}b_{11} & b_{12}\\b_{31} & b_{32}\end{vmatrix}\ne 0,\\
\begin{pmatrix}b_{41} & b_{42}\end{pmatrix} = \begin{pmatrix}b_{21} & b_{22}\end{pmatrix}\begin{pmatrix}b_{11} & b_{12}\\ b_{31} & b_{32}\end{pmatrix}^{-1}\begin{pmatrix}b_{21} & b_{22}\\ b_{61} & b_{62}\end{pmatrix}\\
\begin{pmatrix}b_{51} & b_{52}\end{pmatrix} = \begin{pmatrix}b_{21} & b_{22}\end{pmatrix}\begin{pmatrix}b_{11} & b_{12}\\ b_{31} & b_{32}\end{pmatrix}^{-1}\begin{pmatrix}b_{31} & b_{32}\\ b_{71} & b_{72}\end{pmatrix}\end{matrix}\right.\right\}$$
The quotient maps are
$$\pi_{\mathfrak{S}_1}: (a,b,f)\mapsto \begin{pmatrix}
fa^2\\
fb\\
fab
\end{pmatrix}\begin{pmatrix}
f\\
fa
\end{pmatrix}^{-1},\ B = (b_{ij})\mapsto\begin{pmatrix}
b_{31} & b_{32}\\
b_{41} & b_{42}\\
b_{51} & b_{52}
\end{pmatrix}\begin{pmatrix}
b_{11} & b_{12}\\
b_{21} & b_{22}
\end{pmatrix}^{-1},$$
$$\pi_{\mathfrak{S}_2}: (a,b,f)\mapsto \begin{pmatrix}
fa\\
fba\\
fb^2
\end{pmatrix}\begin{pmatrix}
f\\
fb
\end{pmatrix}^{-1},\ B = (b_{ij})\mapsto\begin{pmatrix}
b_{21} & b_{22}\\
b_{61} & b_{62}\\
b_{71} & b_{72}
\end{pmatrix}\begin{pmatrix}
b_{11} & b_{12}\\
b_{31} & b_{32}
\end{pmatrix}^{-1},$$
so that $X(\mathfrak{S}_1)/\!\!/\GL(\alpha)\cong X(\mathfrak{S}_1)/\!\!/\GL(\alpha)\cong\Mat_{3\times2}(\Bbbk)\cong\mathbb{A}^6$. The sections $s_i: \Mat_{3\times2}(\Bbbk)\rightarrow X(\mathfrak{S}_i)\xrightarrow{\sim}\repq(Q,\mathfrak{S}_i)$ are as follows:
$$\mathfrak{s}_1: \begin{pmatrix}
x^{(1)}_{11} & x^{(1)}_{12}\\
x^{(1)}_{21} & x^{(1)}_{22}\\
x^{(1)}_{31} & x^{(1)}_{32}
\end{pmatrix}\mapsto
\left(\begin{pmatrix}
0 & 1\\
x^{(1)}_{11} & x^{(1)}_{12}
\end{pmatrix},
\begin{pmatrix}
x^{(1)}_{21} & x^{(1)}_{22}\\
x^{(1)}_{31} & x^{(1)}_{32}
\end{pmatrix},
\begin{pmatrix}
1 & 0\end{pmatrix}\right)
,$$
$$\mathfrak{s}_2: \begin{pmatrix}
x^{(2)}_{11} & x^{(2)}_{12}\\
x^{(2)}_{21} & x^{(2)}_{22}\\
x^{(2)}_{31} & x^{(2)}_{32}
\end{pmatrix}\mapsto
\left(\begin{pmatrix}
x^{(2)}_{11} & x^{(2)}_{12}\\
x^{(2)}_{21} & x^{(2)}_{22}
\end{pmatrix},
\begin{pmatrix}
0 & 1\\
x^{(2)}_{31} & x^{(2)}_{32}
\end{pmatrix},
\begin{pmatrix}
1 & 0\end{pmatrix}\right)
,$$
with transition functions
$$
\begin{pmatrix}
x^{(1)}_{11} & x^{(1)}_{12}\\
x^{(1)}_{21} & x^{(1)}_{22}\\
x^{(1)}_{31} & x^{(1)}_{32}
\end{pmatrix} =
\begin{pmatrix}
x^{(2)}_{12}x^{(2)}_{21} - x^{(2)}_{11}x^{(2)}_{22} & x^{(2)}_{11} + x^{(2)}_{22}\\
-\frac{\displaystyle x^{(2)}_{11}}{\displaystyle x^{(2)}_{12}} & \frac{\displaystyle 1}{\displaystyle x^{(2)}_{12}}\\
\frac{\displaystyle (x^{(2)}_{12})^2x^{(2)}_{31} - (x^{(2)}_{11})^2 - x^{(2)}_{11}x^{(2)}_{12}x^{(2)}_{32}}{\displaystyle x^{(2)}_{12}} & \frac{\displaystyle x^{(2)}_{11} + x^{(2)}_{12}x^{(2)}_{32}}{\displaystyle x^{(2)}_{12}}
\end{pmatrix}$$
and
$$
\begin{pmatrix}
x^{(2)}_{11} & x^{(2)}_{12}\\
x^{(2)}_{21} & x^{(2)}_{22}\\
x^{(2)}_{31} & x^{(2)}_{32}
\end{pmatrix} =
\begin{pmatrix}
-\frac{\displaystyle x^{(1)}_{21}}{\displaystyle x^{(2)}_{22}} & \frac{\displaystyle 1}{\displaystyle x^{(1)}_{22}}\\
\frac{\displaystyle (x^{(1)}_{22})^2x^{(1)}_{11} - (x^{(1)}_{21})^2 - x^{(1)}_{12}x^{(1)}_{22}x^{(1)}_{21}}{\displaystyle x^{(1)}_{22}} & \frac{\displaystyle x^{(1)}_{21} + x^{(1)}_{12}x^{(1)}_{22}}{\displaystyle x^{(1)}_{22}}\\
x^{(1)}_{22}x^{(1)}_{31} - x^{(1)}_{21}x^{(1)}_{32} & x^{(1)}_{21} + x^{(1)}_{32}
\end{pmatrix}
$$
Having
$$\begin{pmatrix}
x^{(1)}_{11} & x^{(1)}_{12}\\
x^{(1)}_{21} & x^{(1)}_{22}\\
x^{(1)}_{31} & x^{(1)}_{32}
\end{pmatrix} =
\begin{pmatrix}
-\frac{\displaystyle p_{fa,fa^2}}{\displaystyle p_{f,fa}} & \frac{\displaystyle p_{f,fa^2}}{\displaystyle p_{f,fa}}\\
-\frac{\displaystyle p_{fa,fb}}{\displaystyle p_{f,fa}} & \frac{\displaystyle p_{f,fb}}{\displaystyle p_{f,fa}}\\
-\frac{\displaystyle p_{fa,fab}}{\displaystyle p_{f,fa}} & \frac{\displaystyle p_{f,fab}}{\displaystyle p_{f,fa}}
\end{pmatrix},\quad
\begin{pmatrix}
x^{(2)}_{11} & x^{(2)}_{12}\\
x^{(2)}_{21} & x^{(2)}_{22}\\
x^{(2)}_{31} & x^{(2)}_{32}
\end{pmatrix} =
\begin{pmatrix}
\frac{\displaystyle p_{fa,fb}}{\displaystyle p_{f,fb}} & \frac{\displaystyle p_{f,fa}}{\displaystyle p_{f,fb}}\\
-\frac{\displaystyle p_{fb,fba}}{\displaystyle p_{f,fb}} & \frac{\displaystyle p_{f,fba}}{\displaystyle p_{f,fb}}\\
-\frac{\displaystyle p_{fb,fb^2}}{\displaystyle p_{f,fb}} & \frac{\displaystyle p_{f,fb^2}}{\displaystyle p_{f,fb}}
\end{pmatrix},$$
we obtain the following set of equations
$$p_{fa,fa^2}p_{f,fb}^2 + p_{f,fa}(p_{fa,fb}p_{f,fba} + p_{f,fa}p_{fb,fba}) = 0,$$
$$p_{fb,fb^2}p_{f,fa}^2 - p_{f,fa}(p_{f,fb}p_{fa,fab} - p_{fa,fb}p_{f,fab}) = 0,$$
$$p_{f,fa^2}p_{f,fb} - p_{f,fa}(p_{fa,fb} + p_{f,fba}) = 0,$$
$$p_{f,fb^2}p_{f,fa} - p_{f,fb}(p_{f,fab} - p_{fa,fb}) = 0.$$
So, out of $21$ possible coordinates only $11$ are essential (i.e., occur as numerators of local coordinates on the quotient) and other $10$ ones are exceed. Therefore, the quotient is a locally closed subset in $\mathbb{P}^{10}$ obtained by intersection of nonzero loci of $p_{f,fa}$ and $p_{f,fb}$ with a closed subset defined by the above equations.
In particular, $\widehat{Y}$ is a complete intersection.
}
\end{example}

\end{document}